\documentclass[11pt,a4paper]{amsart}

\setlength{\textwidth}{160mm}
\setlength{\textheight}{237mm}
\setlength{\abovedisplayskip}{14pt}
\setlength{\belowdisplayskip}{14pt}
\setlength{\abovedisplayshortskip}{14pt}
\setlength{\belowdisplayshortskip}{14pt}
\setlength{\oddsidemargin}{0in}
\setlength{\evensidemargin}{0in}
\topmargin=-.1cm

\usepackage{amsmath}
\usepackage{amssymb}

\theoremstyle{plain}   
\begingroup 

\newtheorem{theorem}{Theorem}[section]   
\newtheorem*{th_HW}{Theorem HW}
\newtheorem*{th_DYS}{Theorem DYS}
\newtheorem*{th_I}{Theorem I}
\newtheorem{corollary}[theorem]{Corollary}     
\newtheorem{lemma}[theorem]{Lemma}         
\newtheorem{proposition}[theorem]{Proposition}  
\newtheorem{fact}[theorem]{Fact}
\endgroup


\theoremstyle{definition}
\newtheorem{definition}[theorem]{Definition}   

\theoremstyle{remark}
\newtheorem{remark}[theorem]{Remark}        


\numberwithin{equation}{section}

\newcommand{\ep}{\varepsilon}

\newcommand{\R}{{\mathbb R}}
\newcommand{\N}{{\mathbb N}}

\newcommand{\A}{{\cal A}}
\newcommand{\at}{\widetilde{\mathcal A}}
\newcommand{\ct}{\widetilde{\mathcal C}}

\newcommand{\vf}{\varphi}

\newcommand{\diam}{\operatorname{diam}}




\newcommand{\fu}{\varphi}
\newcommand{\C}{{\cal C}}

\newcommand{\I}{{\cal I}}
\newcommand{\cal}{\mathcal}

\usepackage[dvips]{color}

\begin{document}

\title[Denjoy-Young-Saks theorem]{Properties of Hadamard directional derivatives: Denjoy-Young-Saks theorem for functions on Banach spaces}

\maketitle

\centerline{\bf Lud\v ek Zaj\'\i\v cek}
\centerline{\it Charles University, Faculty of Mathematics and Physics,}
\centerline{\it Sokolovsk\'a 83, 186 75 Praha 8, Czech Republic.}
\centerline{\it  zajicek@karlin.mff.cuni.cz}

\bigskip
\bigskip

The classical Denjoy-Young-Saks theorem on Dini derivatives of arbitrary functions $f: \R \to \R$
 was extended by U.S. Haslam-Jones (1932) and A.J. Ward (1935) to arbitrary functions on $\R^2$.
 This extension gives the strongest relation  among upper and lower Hadamard directional derivatives
	$f^+_H (x,v)$, $f^-_H (x,v)$ ($v \in X$) which holds almost everywhere for an arbitrary function 
	$f:\R^2\to \R$. Our main result extends the theorem of Haslam-Jones and Ward to functions on separable Banach spaces.

\bigskip

 
 {\it Keywords}:\ \ Hadamard upper and lower directional derivatives, Denjoy-Young-Saks theorem, separable Banach space, Hadamard differentiability, Fr\' echet differentiability, Hadamard subdifferentiability,  Fr\' echet subdifferentiability, $\tilde{\C}$-null set, $\Gamma$-null set, Aronszajn null set
\bigskip

{\it 2000 Mathematics Subject Classification}:\  Primary: 46G05; Secondary: 26B05

\bigskip
\bigskip

\section{Introduction}
The classical Denjoy-Young-Saks theorem gives the strongest relation between Dini derivatives $D^+f$, $D_+f$, $D^-f$, $D_-f$, which holds for an arbitrary functions almost everywhere. It reads as
 follows.
\begin{th_DYS}
Let $f$ be an arbitrary real function on $\R$. Then, at almost all $x\in \R$, one from the following
 assertions holds:
\begin{enumerate}
\item  $f'(x) \in \R$ exists,
\item  $D^+f(x) = D^-f(x)=\infty$\ \ and\ \  $D_+f(x)=D_-f(x)=-\infty$,
\item $D^+f(x) =\infty$, $D_-f(x)=-\infty$, \ \ and \ \ $D_+f(x)=D^-f(x) \in \R$,
 \item $D^-f(x) =\infty$, $D_+f(x)=-\infty$, \ \ and \ \ $D_-f(x)=D^+f(x) \in \R$.
 \end{enumerate}
\end{th_DYS}

A natural generalization of Denjoy-Young-Saks theorem to measurable functions on $\R^2$
was proved by U.S. Haslam-Jones \cite{Ha} and was extended to arbitrary functions on $\R^2$
 by A.J. Ward \cite{Wa1}. This generalization works with upper and lower directional derivatives,
 however it is not possible to use the usual (``Dini'') directional derivatives (even for continuous functions, see \cite{Be}), but it is necessary to work with  Hadamard upper and lower directional derivatives  $f^+_H(x,v)$ and $f^-_H(x,v)$ (see Subsection 2.2 below for definitions) which
 were defined (possibly first time) in \cite{Ha} under the name ``directed upper and lower derivatives''. (Haslam-Jones considered their definition as the
 fundamental idea of the article, see \cite[p. 121]{Ha}.) 

The theorem of Haslam-Jones and Ward (see  \cite[13. Summary, p. 31]{Ha} and \cite[Theorem V, p. 352]{Wa1}) can be reformulated in the following way.

	\begin{th_HW}
Let $f$ be an arbitrary real function on $X=\R^2$. Then, at almost all $x\in X$, one from the following
 assertions holds:
\begin{enumerate}
\item  $f$ is Hadamard differentiable at $x$, 
\item $f^+_H (x,v)= \infty$ for each $v \in X$  and  $f^+_H (x,v)= -\infty$ for each $v \in X$,
\item  $f^+_H (x,v)= \infty$ for each $v \in X$ and $L(v): =f^-_H (x,v) \in X^*$,
\item $f^-_H (x,v)= -\infty$ for each $v \in X$ and $L(v): =f^+_H (x,v) \in X^*$.	
 \end{enumerate} 
	\end{th_HW}
	We show (Theorem \ref{dys2}) that this result holds in any separable Banach space $X$ if
	``at almost all points'' means ``at all points except for a set from $\tilde C$''.
	(Note that each set from the $\sigma$-ideal $ \tilde C$ is both Haar null and $\Gamma$-null.)
	
	We emphasize that the condition (i) above holds (in an arbitrary Banach space $X$) if and only if
	 $f^+_H (x,v) =   f^-_H (x,v)$ for each $v\in X$ and $L(v): =f^+_H (x,v) \in X^*$, and so
	 Theorem HW and Theorem \ref{dys2} give a relation among Hadamard directional derivatives
	$f^+_H (x,v)$, $f^-_H (x,v)$ ($v \in X$) which holds a.e. for an arbitrary function on $X$.
	 Moreover (see Remark \ref{nejs}) it is the strongest relation of this type.
	
	S. Saks (\cite[Theorem 14.2]{Sa2}) inferred Theorem HW from results (due to F. Roger) on the contingents
	 (= tangent Bouligand cones) of arbitrary subsets in $\R^3$. His formulation of Theorem HW is 
	 formally very different from that of \cite{Ha} and \cite{Wa1}; in particular he did not used explicitly ``directed derivatives''. Further, his formulation works with Fr\' echet differentiability
	 and  Fr\' echet subgradients (supergradients). So his formulation of Theorem HW is not equivalent 
	 to that of \cite{Ha} and \cite{Wa1} in infinite-dimensional spaces (although it is easily equivalent in $\R^n$). However, we show (Theorem \ref{dys1}) that also this Saks' ``stronger formulation'' extends to some infinite-dimensional spaces. Namely,
	it is  true, if ``almost 
	 all'' is taken in the sense of $\Gamma$-null sets and $X$ is a separable Banach space, in which
	 J. Lindenstrauss and  D. Preiss \cite{LP} proved the `` $\Gamma$-a.e. Rademacher theorem'' for Fr\' echet
	 differentiability of real functions; e.g.  if
	
	\begin{enumerate}
	\item $X$ is a subspace of $c_0$, or
	\item $X=C(K)$, where $K$ is a separable compact space, or
	\item  $X$ is the Tsirelson space.
	\end{enumerate}
	
A.J. Ward (\cite[p. 347]{Wa1}) claims that his arguments can be used to prove Theorem HW in $\R^n$,
 but ``to avoid confusion of quasi-geometrical detail, however, this extension has not been made''
 and it seems that no proof of this extension can be found in the literature.
 However, it is well-known
for a long time that the following interesting special cases of  Theorem HW hold in $\R^n$:
\begin{equation}\label{poli}
\text{Every $f$ on $\R^n$ is differentiable at almost all points at which it is Lipschitz.}
\end{equation}
\begin{equation}\label{mono}
\text{Every cone-monotone function on $\R^n$ is almost everywhere differentiable.}
\end{equation}
Indeed, \eqref{poli} is the well-known Stepanov theorem  and \eqref{mono}
 was proved in \cite{CC}.

For generalizations (using Hadamard differentiability and $\tilde{\C}$-null sets) of \eqref{poli} and \eqref{mono} to separable Banach spaces 
 see \cite[Theorem 3.4
]{ZaGH} and \cite[Corollary 17]{Du}.
 
Generalizations (using Fr\' echet differentiability and $\Gamma$-null sets) of \eqref{poli} and \eqref{mono} to $c_0$ (and similar spaces) were observed in \cite{ZaCM}.

Note also that an incomplete version (see Remark \ref{inco} below) of Theorem HW (which does not imply the linearity of $L$ in
Theorem HW (iii),(iv)) was proved by S. Saks (\cite[Th. 7, p. 238]{Sa1}) before \cite{Wa1}. A similar  incomplete version (see Remark \ref{inco} below) of our Theorem \ref{dys2} follows from a result of \cite{Pr}. Note that D. Preiss
 in \cite{Pr} works with another $\sigma$-ideal of null sets, which is smaller (and possibly strictly
 smaller) than the $\sigma$-ideal $\tilde{\C}$.

Finally note that the present paper is a continuation of author's articles \cite{ZaGH},
\cite{Zadgh}, \cite{ZaCM} which were motivated by the article \cite{Io} by A.D. Ioffe, in which
 the relations among directional Hadamard derivatives of arbitrary functions on separable Banach spaces are investigated.
 A result of \cite{Io} (see Theorem I from Preliminaries) is one from ingredients of our proof.

The article is organized as follows. In Section 2 we recall definitions of the main notions
 and some their properties. The core of the article is  the first step of the proof
 of Lemma \ref{ct} of Section 3, which is the only technical part of the article. This proof
 is an essential modification of the proof of \cite[Theorem 15]{Du}.

The proof of Section 4 is essentially well-known. Indeed, it is ``one-half'' of Mal\' y's   proof
 from  \cite{Ma}, which shows that each Rademacher theorem (for real functions) easily implies a corresponding Stepanov theorem.

Section 5 contains the main results. Their proofs follow easily from results of Sections 3 and 4
 and the above mentioned Ioffe's result.

\section{Preliminaries}

\subsection{Basic notation}

In the following, by a Banach space we mean a real Banach space. If $X$ is a Banach space, we set
    $S_X:= \{x \in X: \|x\|=1\}$. The symbol $B(x,r)$ will denote the open ball with center $x$ and radius $r$. The characteristic function of a set $C\subset X$ is denoted by $\chi_C$.
    
      Let $X$ be a Banach space, $x \in X$, $v \in S_X$ and $\delta >0$. Then we define the open cone
     $C(x,v,\delta)$ as the set of all $y \neq x$ for which  $\|v - \frac{y-x}{\|y-x\|}\| < \delta$.

The following easy inequality is well known (see e.g.\ \cite[Lemma~5.1]{MS}):

\begin{equation}\label{triangle}
\text{if }u,w\in X\setminus\{0\},\text{ then }
 \left\|\frac{u}{\|u\|}-\frac{w}{\|w\|}\right\|
\leq \frac{2}{\|u\|}\, \|u-w\|.
\end{equation}

\subsection{Directional derivatives, derivatives and subgradients}

 In this subsection we suppose that $f$ is a real function defined on an open subset $G$  of a Banach space $X$.
  
  We say  that {\it $f$ is Lipschitz at $x \in G$} if  $\limsup_{y \to x} \frac{|f(y)-f(x)|}{\|y-x\|} < \infty$.

  The {\it Hadamard directional and one-sided  directional derivatives} 
    of $f$ at $x\in G$ in the direction $v\in X$ are defined respectively by
   $$f'_H(x,v) := \lim_{z \to v, t \to 0} \frac{f(x+tz)-f(x)}{t}\ \ \text{and}\ \  f'_{H+}(x,v) := \lim_{z \to v, t \to 0+} \frac{f(x+tz)-f(x)}{t}.$$
  
   Following \cite{Io}, we denote the {\it upper  and lower Hadamard one-sided directional derivatives} 
    of $f$ at $x$ in the direction $v$ by
   $$f^+_H(x,v):=\limsup_{z \to v, t\to 0+}(f(x+tz)-f(x))t^{-1} \ \ \text{and}\ \ f^-_H(x,v):=\liminf_{z \to v, t\to 0+}(f(x+tz)-f(x))t^{-1} .$$
   
   Obviously, 
   \begin{equation}\label{hom}
   \text{if $v \neq 0 $ and $ \lambda >0$, then  $f^+_H(x,\lambda v) = \lambda f^+_H(x,v)$ and $f^-_H(x,\lambda v) = \lambda f^-_H(x,v)$.}
   \end{equation}

   Further (see \cite[the proof of Proposition 4.4]{Pe}), the following easy fact holds.
   \begin{lemma}\label{hdvn}
   The following assertions are equivalent.
   \begin{enumerate}
   \item
   $\liminf_{y \to x} \frac{f(y)-f(x)}{\|y-x\|} > -\infty$,
   \item
   $f^-_H(x,0)  > -\infty$,
  \item
  $f^-_H(x,0) =0$.
  \end{enumerate}
  \end{lemma}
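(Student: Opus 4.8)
The plan is to prove the cycle (iii)$\Rightarrow$(ii)$\Rightarrow$(i)$\Rightarrow$(iii), after first recording one trivial but crucial observation: taking $z=0$ in the definition of $f^-_H(x,0)$ gives difference quotients $(f(x+t\cdot 0)-f(x))/t=0$ for every $t>0$, so the universal upper bound $f^-_H(x,0)\le 0$ holds for every $f$ and every $x$. The implication (iii)$\Rightarrow$(ii) is then immediate, since $0>-\infty$. The whole argument rests on the elementary identity, valid for $z\neq 0$ and $t>0$,
\[
\frac{f(x+tz)-f(x)}{t}=\|z\|\cdot\frac{f(x+tz)-f(x)}{\|(x+tz)-x\|},
\]
which lets me pass between the ``radial'' difference quotient governing (i) and the Hadamard difference quotient governing (ii) and (iii).

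For (i)$\Rightarrow$(iii) I would argue as follows. Write $m:=\liminf_{y\to x}(f(y)-f(x))/\|y-x\|>-\infty$. Fix $\varepsilon>0$; then there is $\rho>0$ such that $(f(y)-f(x))/\|y-x\|\ge m-\varepsilon$ whenever $0<\|y-x\|<\rho$. Applying the displayed identity with $y=x+tz$ and multiplying the bound by $\|z\|\ge 0$ gives $(f(x+tz)-f(x))/t\ge \|z\|(m-\varepsilon)$ for all sufficiently small $t>0$ and $z\neq 0$ (and trivially for $z=0$). Since the right-hand side tends to $0$ as $z\to 0$, this forces $f^-_H(x,0)\ge 0$; combined with the universal upper bound it yields $f^-_H(x,0)=0$, that is, (iii).

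The main work, and the step I expect to be the only real obstacle, is (ii)$\Rightarrow$(i), which I would do by contraposition: assuming $\liminf_{y\to x}(f(y)-f(x))/\|y-x\|=-\infty$, I must produce admissible pairs $(z,t)$ with $z\to 0$, $t\to 0+$ along which the Hadamard quotient tends to $-\infty$, thereby forcing $f^-_H(x,0)=-\infty$. Given $M,\delta>0$, the hypothesis supplies $y$ with $r:=\|y-x\|$ as small as desired and $b:=(f(y)-f(x))/r$ as negative as desired. Writing $y=x+tz$ forces $\|z\|=r/t$ and $(f(x+tz)-f(x))/t=rb/t$, so I must choose $t$ making $\|z\|=r/t$ small while $|rb/t|$ is large --- a genuine balancing problem, since shrinking $t$ helps the second requirement but hurts the first. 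The key computation is that the two constraints $\|z\|<\delta$ (i.e.\ $t>r/\delta$) and $rb/t<-M$ (i.e.\ $t<r|b|/M$) are simultaneously satisfiable precisely when $|b|>M/\delta$, which I can arrange by choosing $y$ with a sufficiently negative quotient; any $t$ in the resulting nonempty interval (also made to satisfy $t<\delta$ by taking $r<\delta^2$) does the job. Letting $M\to\infty$ and $\delta\to 0$ then gives $f^-_H(x,0)=-\infty$, completing the contrapositive and hence the cycle. (Alternatively, one may first note via the scaling $(z,t)\mapsto(\lambda z,\,t/\lambda)$ that $f^-_H(x,0)\in\{0,\pm\infty\}$, which streamlines the bookkeeping but is not essential.)
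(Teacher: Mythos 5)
Your proof is correct. Note that the paper does not actually prove this lemma---it states it as an ``easy fact'' and refers to the proof of Proposition 4.4 in Penot's book---so there is no in-paper argument to compare against; your cycle (iii)$\Rightarrow$(ii)$\Rightarrow$(i)$\Rightarrow$(iii), built on the identity $(f(x+tz)-f(x))/t=\|z\|\,(f(x+tz)-f(x))/\|tz\|$ and the balancing of the constraints $t>r/\delta$ and $t<r|b|/M$ in the contrapositive step, is exactly the standard rescaling argument one would expect that reference to contain, and all the quantifier bookkeeping (including the observation that $z=0$ is admissible, giving $f^-_H(x,0)\le 0$) checks out.
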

   
    
 The usual modern definition of the Hadamard derivative is the following:
 
 A functional $L \in X^*$ is said to be a {\it Hadamard derivative} of $f$ at a point $x \in X$ if
  $$ \lim_{t \to 0} \frac{f(x+tv)-f(x)}{t} = L(v)\ \ \  \text{for each}\ \ \ v \in X$$
   and the limit is uniform with respect to $v \in C$, whenever $C\subset X$ is a compact set.
    In this case we set $f'_H(x) := L$.
    
    It is well-known (see \cite{Sha}, cf. \cite[Exercise 1, p. 132]{Pe}) that  $L\in X^*$ is a  Hadamard derivative of $f$ at $x$ if and only if
      $f'_H(x,v)= L(v)$ for each $v \in X$.

    Recall (see, e.g. \cite{Sha} or \cite{Pe}), that if  $f$ is Hadamard differentiable at $a \in G$, then $f$ is G\^ ateaux
     differentiable at $a$, and 
    if $f$ is locally Lipschitz on $G$, then also the opposite implication holds.
 
     Further, if  $f$ is Fr\' echet differentiable at $a \in G$, then $f$ is Hadamard
     differentiable at $a$, and  
    if $X= \R^n$, then also the opposite implication holds.

     A functional $L \in X^*$ (see, e.g. \cite{ACL}) is said to be a {\it Fr\' echet subgradient} (resp. {\it Hadamard subgradient}) of $f$ at $x$, if
         $$ \liminf_{h \to 0}  \frac{f(x+h) - f(x) - L(h))}{ \|h\|} \geq 0\ \ \ \ (\text{resp.}\ \  L(h) \leq f^-_H(x,v), \ v \in X).$$
A functional $L \in X^*$  is said to be a {\it Fr\' echet supergradient} (resp. {\it Hadamard supergradient}) of $f$ at $x$, if $-L$ is a	{\it Fr\' echet subgradient} (resp. {\it Hadamard subgradient}) of $-f$ at $x$.

We say that $f$ is  Fr\' echet (resp. Hadamard) subdifferentiable at $x$ if there exists a  Fr\' echet (resp. Hadamard) subgradient of $f$ at $x$. Fr\' echet (resp. Hadamard) superdifferentiability is defined analogously.
We will need the following well-known facts.
\begin{fact}\label{ssdi}
\begin{enumerate}
\item
 If $L \in X^*$ is  a Fr\' echet subgradient of $f$ at $a \in G$, then $L$ is  a Hadamard
     subgradient of $f$  at $a$, and
    if $X= \R^n$, then also the opposite implication holds.
		\item
$f$ is  Fr\' echet (resp. Hadamard) differentiable at $x$ if and only if $f$ is both 
 Fr\' echet (resp. Hadamard) subdifferentiable at $x$ and  Fr\' echet (resp. Hadamard) superdifferentiable at $x$.
\end{enumerate}
\end{fact}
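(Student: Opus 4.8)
The plan is to prove both assertions by directly unwinding the definitions of the Fr\'echet and Hadamard sub/supergradients recalled above; the only step requiring a genuine idea is the reverse implication in $\R^n$ of part (i), which rests on compactness of the unit sphere.

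For ``Fr\'echet $\Rightarrow$ Hadamard'' in (i), I would fix $L\in X^*$ with $\liminf_{h\to 0}\frac{f(a+h)-f(a)-L(h)}{\|h\|}\ge 0$, fix a direction $v\neq 0$, and take arbitrary $z_n\to v$, $t_n\to 0+$, so that $h_n:=t_nz_n$ satisfies $h_n\to 0$ and $h_n\neq 0$ for large $n$. The identity
\[
\frac{f(a+t_nz_n)-f(a)}{t_n}=\frac{f(a+h_n)-f(a)-L(h_n)}{\|h_n\|}\,\|z_n\|+L(z_n),
\]
combined with $\|z_n\|\to\|v\|>0$, with $L(z_n)\to L(v)$ by continuity of $L$, and with the hypothesis that the first quotient has $\liminf\ge 0$, yields $\liminf_n\frac{f(a+t_nz_n)-f(a)}{t_n}\ge L(v)$; taking the infimum over all admissible sequences gives $f^-_H(a,v)\ge L(v)$, i.e.\ $L$ is a Hadamard subgradient. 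The direction $v=0$ is handled separately: the existence of a Fr\'echet subgradient forces $\liminf_{y\to a}\frac{f(y)-f(a)}{\|y-a\|}>-\infty$, so Lemma~\ref{hdvn} gives $f^-_H(a,0)=0=L(0)$.

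For the converse in $\R^n$, I would argue by contradiction: if $L$ is a Hadamard subgradient but not a Fr\'echet one, there are $h_n\to 0$, $h_n\neq 0$, with $\frac{f(a+h_n)-f(a)-L(h_n)}{\|h_n\|}\to -c$ for some $c>0$. Setting $t_n:=\|h_n\|\to 0+$ and $z_n:=h_n/\|h_n\|$, the crucial step is that compactness of the sphere $S_{\R^n}$ lets me pass to a subsequence along which $z_n\to v\in S_{\R^n}$; rewriting the quotient as $\frac{f(a+t_nz_n)-f(a)}{t_n}-L(z_n)$ and taking $\liminf$ along this subsequence yields $-c\ge f^-_H(a,v)-L(v)\ge 0$, a contradiction. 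This compactness is the only obstacle in the whole statement, and it is precisely what fails in infinite dimensions.

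For (ii) I would treat the Hadamard and Fr\'echet cases in parallel, using $(-f)^-_H(a,v)=-f^+_H(a,v)$, so that $L$ is a Hadamard supergradient of $f$ iff $L(v)\ge f^+_H(a,v)$ for all $v$ (with the analogous ``$\limsup\le 0$'' characterization in the Fr\'echet case). If $f$ is differentiable with derivative $L$, then $L$ is simultaneously a sub- and a supergradient (in the Hadamard case because $f^-_H(a,\cdot)=f^+_H(a,\cdot)=L$, in the Fr\'echet case because the defining quotient tends to $0$), so $f$ is both sub- and superdifferentiable. Conversely, given a subgradient $L_1$ and a supergradient $L_2$, one obtains $L_1(v)\le f^-_H(a,v)\le f^+_H(a,v)\le L_2(v)$ for all $v$ (resp.\ the Fr\'echet $\liminf/\limsup$ analogue); comparing these estimates and using linearity (after the usual scaling in the Fr\'echet case) forces $L_1=L_2=:L$, whence $f^-_H(a,\cdot)=f^+_H(a,\cdot)=L\in X^*$ (resp.\ $\lim_{h\to 0}\frac{f(a+h)-f(a)-L(h)}{\|h\|}=0$). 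In the Hadamard case the equalities $f^-_H(a,v)=f^+_H(a,v)=L(v)$ say that $f'_{H+}(a,v)=L(v)$ for every $v$; applying this to $v$ and $-v$ and using linearity of $L$ gives the two-sided limit $f'_H(a,v)=L(v)$ for all $v$, which is the criterion for Hadamard differentiability recalled in Subsection~2.2, while in the Fr\'echet case the squeezed limit is the definition of Fr\'echet differentiability. I expect the write-up to be short, the sphere-compactness argument being the only non-routine ingredient.
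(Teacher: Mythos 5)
Your proof is correct, but it is worth noting that the paper does not actually prove Fact~\ref{ssdi} at all: it simply cites Penot's book (\cite[p.~266]{Pe}) for both parts. So your contribution is a self-contained verification from the definitions, and the route you take is exactly the standard one that such a reference would contain: the rescaling identity $\frac{f(a+t_nz_n)-f(a)}{t_n}=\frac{f(a+h_n)-f(a)-L(h_n)}{\|h_n\|}\|z_n\|+L(z_n)$ for the Fr\'echet-to-Hadamard implication, compactness of $S_{\R^n}$ for the converse, and the sandwich $L_1(v)\le f^-_H\le f^+_H\le L_2(v)$ evaluated at $\pm v$ for part (ii). All the delicate points are handled: you treat $v=0$ via Lemma~\ref{hdvn}, you observe that a Fr\'echet subgradient forces the lower Lipschitz bound needed there, and in the Hadamard case of (ii) you correctly pass from the one-sided identity $f'_{H+}(a,\pm v)=L(\pm v)$ to the two-sided criterion $f'_H(a,v)=L(v)$ recalled from \cite{Sha}. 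The only imprecision is in the $\R^n$ converse of (i): if $L$ fails to be a Fr\'echet subgradient, the $\liminf$ of the quotient is negative but could be $-\infty$, so you cannot always extract $h_n$ with the quotient converging to a finite $-c$; you should instead take a sequence along which the quotient stays $\le -c$ for some fixed $c>0$ (or allow $c=\infty$), after which the compactness argument goes through verbatim and still yields $f^-_H(a,v)\le L(v)-c<L(v)$, the desired contradiction. This is a one-line repair, not a gap in the idea.
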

\begin{proof}
For (i) see, e.g., \cite[p. 266, (4.8) and Proposition 4.6)]{Pe}. For (ii) see, e.g., \cite[Remark on p. 266 and Corollary 4.5]{Pe}.
\end{proof}

     \subsection{Null sets}

     \begin{definition}\label{atil}
Let $X$ be a separable Banach space and $B \subset X$ a Borel set.
\begin{enumerate}
\item [(i)]
 If $0\neq v \in X$  and $\ep > 0$, then we say that 
$B \in \at(v, \ep)$ if
$\{t: \fu (t) \in B \}$ is Lebesgue null whenever
  $\fu : \R \to X$ is
 such that the function $x \mapsto \fu(x) - xv$ has Lipschitz constant
 at most $\ep$.
\item[(ii)]
We say that  $B \in \ct$ if $B$ can be written in the form
 $B = \bigcup_{n=1}^{\infty} B_n$, where $B_n \in \at (v_n,\ep_n)$
 for some $v_n \neq 0$ and $\ep_n >0$.
\end{enumerate}
We define $\ct = \ct(X)$ as the system of all $A \subset X$ for which there exists a Borel set $B \in \ct$ such that $A \subset B$. The sets from $\ct$ will be called $\ct$-null.
 \end{definition}
\begin{remark}\label{kct}
\begin{enumerate}
\item[(a)]
It is easy to see that $\ct$ is $\sigma$-ideal.
\item[(b)]
The present definition of the system $\ct$  slightly differs from that of \cite{PZ}, under which each member of $\ct$ is Borel.
 So $\ct$ by our definition is the  $\sigma$-ideal generated by the corresponding system of \cite{PZ}.
 \item[(c)]
 Each member of $\ct$ is contained in an Aronszajn null set (see \cite[Proposition 13]{PZ}) and thus it is Haar null.
  (For definition of Aronszajn and Haar null sets see \cite{BL}.) Moreover, each member of $\ct$ is also $\Gamma$-null
   (see \cite{ZaMB}). The important $\sigma$-ideal of $\Gamma$-null subsets of $X$ was introduced
	 in \cite{LP}.
	 \item[(d)]
	 The main theorem  of \cite{PZ} works with the system $\at$. We will not need its definition; note
	 only that (obviously) $\at \subset \ct$ and it is not known whether $\at(X) = \ct(X)$ for each $X$.
\end{enumerate}
\end{remark}

    \begin{definition}
    Let $X$ be a Banach space.
		\begin{enumerate}
		\item
		We say that $A \subset X$ is porous at a point $x \in X$ if there exits a sequence $x_n \to x$ and $c>0$ such that $B(x_n, c \|x_n-x\|) \cap A = \emptyset$. 
		If, moreover, $(x_n)$ can be taken so that all $x_n$ belong to a half-line $\{x+tv: t\geq 0\}$
		 ($v\neq 0$), we say that $A$ is directionally porous at $x$.
		\item
	    We say that $A \subset X$ is porous (resp. directionally porous) if $A$ is porous (resp. directionally porous) at each point $x \in A$. 
			\item
            We say that $A \subset X$ is $\sigma$-porous (resp. $\sigma$-directionally porous) if it is a countable union of porous (resp. directionally porous) 
       sets.
			\end{enumerate}
       \end{definition}
If $X$  is a separable Banach space, then  (see, e.g., \cite[3.1]{ZaAAA})    
 \begin{equation}\label{dpct}
\text{each $\sigma$-directionally porous subset of $X$ belongs to $\at$, and so to $\ct$.}
\end{equation}

   We will need the following immediate consequence of \cite[Theorem 3.1(b)]{Io}. It uses
	 the notion of sparse sets (i.e., sets which can be covered by countably many Lipschitz hypersurfaces). We will not need the formal definition of sparse sets, but only the almost obvious
	 fact (see, e.g., \cite{Io}), that
	\begin{equation}\label{spdp}
	\text{each sparse set is $\sigma$-directionally porous.}
	\end{equation}
	\begin{th_I}\label{io}
	Let $X$ be a separable space, $G \subset X$  an open set and $f:G \to \R$  an arbitrary function. 
		Let $H$ be the set of all $x \in G$,
	for which
   $f_H^{-}(x,v) +  f_H^{-}(x,v) > 0$ or $f_H^{+}(x,v) +  f_H^{+}(x,v) < 0$ for some $v \in X$.
	 Then $H$ is sparse.
\end{th_I}

\section{Lower Lipschitzness via cone lower Lipschitzness}

We will need the following easy lemma.
\begin{lemma}\label{spar}
Let $X$ be a separable space, $G \subset X$  an open set and $f:G \to \R$  an arbitrary function. 
Denote by $T(f)$ the set of all $x \in G$ for which there exists a cone $C_x=C(x,v,\delta)$ such that
$$  f(x) < \liminf_{y \to x, y \in C_x}\ f(y).$$
Then $T(f) \in \tilde{\C}$.
\end{lemma}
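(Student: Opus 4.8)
The plan is to show that $T(f)$ is $\sigma$-directionally porous; by \eqref{dpct} this at once gives $T(f)\in\at\subseteq\ct$. The decisive idea is to decompose $T(f)$ not only according to the cone parameters but also according to the value of $f$. For $v\in S_X$ and $\delta,\eta,r>0$, I would let $T(v,\delta,\eta,r)$ be the set of all $x\in G$ with
$$f(y)>f(x)+\eta\qquad\text{whenever } y\in C(x,v,\delta)\text{ and }0<\|y-x\|<r,$$
and, for $k\in\Z$, set $T(v,\delta,\eta,r,k)=\{x\in T(v,\delta,\eta,r): f(x)\in[k\eta/2,(k+1)\eta/2)\}$.

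First I would check that $T(f)$ is covered by countably many of the sets $T(v,\delta,\eta,r,k)$. Given $x\in T(f)$ with witnessing cone $C(x,v,\delta)$, I choose a positive rational $\eta$ below the (possibly infinite) gap $\liminf_{y\to x,\,y\in C_x}f(y)-f(x)$, and use the definition of $\liminf$ to produce $r>0$ with $f(y)>f(x)+\eta$ on the punctured cone-ball. Then I replace $v$ by a nearby element $v_j$ of a fixed countable dense subset of $S_X$ (here separability of $X$ enters) and shrink $\delta$ to a rational $\delta'$ so that $C(x,v_j,\delta')\subseteq C(x,v,\delta)$; finally the value $f(x)\in\R$ lands in one interval $[k\eta/2,(k+1)\eta/2)$. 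Rationalizing all parameters yields a countable cover.

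The core step is that each $T(v,\delta,\eta,r,k)$ is directionally porous, with the half-line direction equal to the cone axis $v$. Fix $x$ in this set and put $x_t:=x+tv$. For small $t>0$ and $c:=\delta/3$, any $z\in B(x_t,ct)$ satisfies $z-x=tv+e$ with $\|e\|<ct$, so \eqref{triangle} gives $\bigl\|\tfrac{z-x}{\|z-x\|}-v\bigr\|<2c<\delta$ while $0<(1-c)t\le\|z-x\|\le(1+c)t<r$; hence $z$ lies in the punctured cone $C(x,v,\delta)$ within radius $r$, and if $z\in T(v,\delta,\eta,r)$ this forces $f(z)>f(x)+\eta$. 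But two points of $T(v,\delta,\eta,r,k)$ have $f$-values in a common interval of length $\eta/2<\eta$, so $f(z)>f(x)+\eta$ is impossible. Thus $B(x_t,ct)\cap T(v,\delta,\eta,r,k)=\emptyset$ for all small $t$, which is exactly directional porosity at $x$. As this holds at every point of the set, $T(v,\delta,\eta,r,k)$ is directionally porous, whence $T(f)$ is $\sigma$-directionally porous.

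The main obstacle, and the reason the statement is not completely trivial, is precisely the value-stratification. Without the extra index $k$ the same computation only shows that $f$ increases by at least $\eta$ as one moves outward along the cone axis through points of $T(v,\delta,\eta,r)$, which is no contradiction for an arbitrary, possibly unbounded, $f$; the sets $T(v,\delta,\eta,r)$ need not be directionally porous. Confining attention to points whose $f$-values lie in a fixed short interval converts this monotonicity into the emptiness of the test balls. The remaining points are pure bookkeeping: securing the inclusion $C(x,v_j,\delta')\subseteq C(x,v,\delta)$ after passing to the countable dense direction set, and noting $z\neq x$ (since $\|z-x\|\ge(1-c)t>0$) so that $z$ genuinely lies in the punctured cone-ball.
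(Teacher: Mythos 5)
Your proof is correct and follows essentially the same route as the paper: both show that $T(f)$ is $\sigma$-directionally porous via a countable decomposition that separates function values, and then invoke \eqref{dpct}. The paper's decomposition is leaner --- it fixes a single rational $r_n$ with $f(x)<r_n<\liminf_{y\to x,\,y\in C_x}f(y)$, so every point of the cone-ball has $f$-value above $r_n$ and hence cannot lie in the piece $T(f,n)$, which makes your extra indexing by $v_j$, $\delta'$, $r$ and the value-intervals of length $\eta/2$ unnecessary.
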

\begin{proof}
Let $(r_n)_1^{\infty}$  be a sequence of all rational numbers. Set
$$  T(f,n):= \{x\in T(f):\ f(x) < r_n <\liminf_{y \to x, y \in C_x}\ f(y)\}.$$
Obviously,  $T(f) = \bigcup_{n=1}^{\infty} T(f,n)$. Fix an arbitrary $n \in \N$ and $x \in T(f,n)$.
Then there exists  $\eta >0$  such that $f(y) > r_n$ for each $y \in C_x \cap B(x,\eta)$.
So $y \notin T(f,n)$ for each  $y \in C_x \cap B(x,\eta)$. This clearly implies that
 $T(f,n)$ is directionally porous and so $T(f)$ is $\sigma$-directionally porous. Thus
$T(f) \in \tilde{\C}$ by \eqref{dpct}. 
\end{proof}
(Note that the above proof shows that $T(f)$ is sparse, but we will not need this stronger fact.)

\begin{lemma}\label{ct}
Let $X$ be a separable Banach space, $G \subset X$  an open set, $v \in S_X$, $0< \delta<1$ and $f:G \to \R$  an arbitrary function. Denote by $A(f,v,\delta)$ the set of all $x \in G$, for which
\begin{enumerate}
\item  \ \ \  $f(y) \geq f(x)$ whenever  $y \in C(x,v, \delta) \cap B(x,\delta)$ and
\item \ \ \ $\liminf_{z \to x}  \frac{f(z)-f(x)}{\|z-x\|} = - \infty.$ 
\end{enumerate}
Then  $A(f,v,\delta) \in \tilde{\C}$.
\end{lemma}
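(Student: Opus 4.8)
The plan is to show that $A:=A(f,v,\delta)$ is $\tilde{\C}$-null by covering a Borel hull of it by countably many sets, each either $\sigma$-directionally porous (hence in $\tilde{\C}$ by \eqref{dpct}) or Borel and null with respect to a suitable family $\at(v,\ep)$; the conclusion then follows because $\tilde{\C}$ is a $\sigma$-ideal (Remark \ref{kct}(a)). The starting point is the structural consequence of (i): if $x\in A$ and $y\in C(x,v,\delta)\cap B(x,\delta)$ then $f(y)\ge f(x)$, so that $f$ restricted to $A$ is monotone along the cone order (whenever $x,y\in A$ and $y$ lies in the forward cone of $x$, one has $f(y)\ge f(x)$).

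Next I would fix $\ep\in(0,\delta/(2+\delta))$ and exploit \eqref{triangle} to control admissible curves. For $\fu:\R\to X$ with $\Lip(t\mapsto\fu(t)-tv)\le\ep$, write $\Psi(t)=\fu(t)-tv$ and $u=\fu(t)-\fu(t_0)=(t-t_0)v+(\Psi(t)-\Psi(t_0))$ for $t>t_0$; since $\|u-(t-t_0)v\|\le\ep(t-t_0)$ and $\|u\|\ge(1-\ep)(t-t_0)$, inequality \eqref{triangle} gives $\bigl\|\,u/\|u\|-v\,\bigr\|\le 2\ep/(1-\ep)<\delta$. Hence, whenever $\fu(t_0)\in A$, the forward points $\fu(t)$ for $t$ slightly larger than $t_0$ lie in $C(\fu(t_0),v,\delta)\cap B(\fu(t_0),\delta)$, and so by (i) the composition $g:=f\circ\fu$ satisfies $g(t)\ge g(t_0)$ on a right neighbourhood of each $t_0\in N:=\fu^{-1}(B)$, for any Borel $B\subset A$. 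Decomposing $N$ by the length of this neighbourhood and localising to short intervals, $g$ agrees on each piece with a nondecreasing function, and is therefore lower-Lipschitz along the curve at almost every point of the piece.

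The core — and the only genuinely technical — step is to upgrade this to $|N|=0$, i.e. $B\in\at(v,\ep)$, and here condition (ii) must be invoked. The obstacle is structural: the curve sees $f$ only in the (essentially $v$-)direction, whereas (ii) produces at $x=\fu(t_0)$ points $z\to x$ with $(f(z)-f(x))/\|z-x\|\to-\infty$ which, by (i), lie outside $C(x,v,\delta)$, i.e. in directions at distance $\ge\delta$ from $v$; for $z$ transverse to the curve this steep descent is invisible to $g$. I would overcome this exactly as in the modification of \cite[Theorem 15]{Du}: use the cone comparison of (i) as a sandwich — if $y\in A$ and the descent point $z$ lies in $C(y,v,\delta)\cap B(y,\delta)$, then $f(y)\le f(z)$ — to propagate the value drop at $z$ to a whole sub-cone of neighbouring $A$-points, thereby producing, at a density point $t_0$ of a hypothetical positive-measure $N$, nearby $A$-points whose values decrease at rate tending to $-\infty$; intersecting these with $N$ and comparing with the lower-Lipschitz bound of the previous paragraph yields the required contradiction. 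This is where the quantitative bookkeeping is concentrated — a Vitali-type selection together with a careful balance of the cone aperture $\delta$, the Lipschitz bound $\ep$, and the descent rate — and it is the heart of the argument.

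Finally, running the above over a countable family of Borel sets exhausting $A$, indexed by rational bounds on $f(x)$, on the scale appearing in (i), and on the localisation integer, expresses a Borel hull of $A$ as a countable union of $\at(v,\ep)$-null Borel sets, together with any $\sigma$-directionally porous remainder (handled by \eqref{dpct}, as in Lemma \ref{spar}). By the $\sigma$-ideal property of $\tilde{\C}$ this gives $A\in\tilde{\C}$, as required.
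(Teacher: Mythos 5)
Your first (and main) step is essentially the paper's argument: restrict to an admissible curve $\fu$, use \eqref{triangle} to see that forward increments of $\fu$ stay in the cone $C(\cdot,v,\delta)$, conclude that $g=f\circ\fu$ is nondecreasing on the preimage of $B$ after localising to a short interval, and then, at a point $t_0$ which is simultaneously a density point of the preimage and a differentiability point of the monotone extension of $g$, use (ii) to produce a steep descent point $z$ near $x_0=\fu(t_0)$ lying in the $\delta$-cone of a curve point $\fu(t)$ with $t$ slightly behind $t_0$; condition (i) at $\fu(t)$ then forces $f(\fu(t))\le f(z)\ll f(x_0)$, contradicting the Lipschitz bound at $t_0$. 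This is exactly the paper's contradiction, and although you defer the constants, the mechanism is right. (One quantitative remark: your choice $\ep<\delta/(2+\delta)$ puts the curve increments only just inside the $\delta$-cone, leaving no slack to absorb the perturbation from $x_0$ to $z$; you need the increments inside a cone of aperture about $\delta/2$, e.g.\ $\ep=\delta/5$ as in the paper. This is harmless, since a smaller $\ep$ only makes membership in $\at(v,\ep)$ easier to verify.)

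The genuine gap is in your final paragraph, the reduction to Borel sets. The definition of $\at(v,\ep)$, and hence of $\ct$, requires a \emph{Borel} witness, and the curve argument needs $\fu^{-1}(B)$ to be Lebesgue measurable; so one must exhibit a Borel superset of $A(f,v,\delta)$ to which the first step applies. Your proposal to exhaust $A(f,v,\delta)$ by a countable family of Borel sets ``indexed by rational bounds on $f(x)$'' does not work: for an arbitrary $f$ the sets $\{x:\,f(x)<r\}$, and likewise the sets defined by conditions (i) and (ii), need not be Borel, so no such Borel exhaustion exists, and you never identify which ``remainder'' is supposed to be $\sigma$-directionally porous. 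The paper's device here is specific and is needed: truncate $f$ to make it bounded, replace it by its lower semicontinuous envelope $g(x)=\min\bigl(f(x),\liminf_{t\to x}f(t)\bigr)$, check that $A(g,v,\delta)=M\setminus L$ is Borel (with $M$ closed and $L$ an $F_\sigma$), apply the curve argument to $A(g,v,\delta)$, and finally observe that $\{x\in A(f,v,\delta):\,f(x)>g(x)\}$ is contained in the set $T(g)$ of Lemma \ref{spar}, hence is $\sigma$-directionally porous and in $\ct$ by \eqref{dpct}. Some regularisation of this kind must be supplied; as written, your proof is incomplete at this point.
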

\begin{proof}
The set  $A(f,v,\delta)$ need not be Borel. From this reason we will proceed in two steps. 
 In the first (essential) step we will prove $A(f,v,\delta) \in \tilde{\C}$ under 
the assumption
 that  $A(f,v,\delta)$ is Borel. In the second step we will reduce the general case to the case
 when $f$ is lower semi-continuous, in which  $A(f,v,\delta)$ is Borel.

\underline{First step}:\  \ Suppose that  $B:= A(f,v,\delta)$ is Borel.
It is sufficient to prove that $B \in \tilde{\A}(v, \ep)$, where $\ep:= \delta/5$.
 So let $\vf: \R \to X$ be a mapping such that $\psi: t \to \vf(t)-tv$ has Lipschitz constant at most $\ep$.
 Suppose to the contrary that $\lambda(\vf^{-1}(B))>0$. Choose an open interval $I \subset \R$ such that
 \begin{equation}\label{hv}
 \diam I < \frac{\delta}{1+\ep}\ \ \ \text{and}\ \ \  \lambda(B^*)>0,
 \end{equation}
 where  $B^*:=  \vf^{-1}(B) \cap I$. 
 
 We will show that the function  $g:= f \circ \vf\restriction_{B^*}$ is nondecreasing. So suppose
  that $t_1, t_2 \in B^*$ and $t_1<t_2$. Since  $\vf(t_2) - \vf(t_1) = (t_2 - t_1) v + (\psi(t_2) - \psi(t_1))$, using 
   \eqref{triangle} with $u:= (t_2 - t_1) v$ and $w:= \vf(t_2) - \vf(t_1)$, we obtain
   $$ \left\| v - \frac{\vf(t_2) - \vf(t_1)}{\|\vf(t_2) - \vf(t_1)\|}\right\| \leq \frac{2}{|t_2-t_1|} \|\psi(t_2)-\psi(t_1)\|\leq 2 \ep < \frac{\delta}{2}.$$
   So  $\vf(t_2) \in C (\vf(t_1), v, \delta/2)$.  Further, using \eqref{hv}, we obtain
    \begin{equation}\label{prf}
    (1-\ep) (t_2-t_1) \leq \|\vf(t_2)- \vf(t_1)\| \leq (1+\ep) (t_2-t_1) < \delta.
        \end{equation}
    So we obtain
        \begin{equation}\label{tjd}
    \vf(t_2) \in C (\vf(t_1), v, \delta/2) \cap B(\vf(t_1), \delta).
    \end{equation}
     Since $\vf(t_1) \in B$, by (i) we obtain  $f(\vf(t_2)) \geq f(\vf(t_1))$. So $g$ is nondecreasing.
     
     Set  $J:= (\inf B^*, \sup B^*)$ and $\tilde{g}(t):= \sup\{g(\tau):\ \tau \in B^* \cap (-\infty,t]\}$ for $t \in J$.
      Then $\tilde{g}$  extends $g$  and is finite and nondecreasing on $J$. 
       Consequently we can choose $t_0 \in B^* \cap J$ such that
       \begin{equation}\label{dh}
       (\tilde{g})'(t_0) \in \R\ \ \ \text{and}\ \ \ t_0\ \ \ \text{is a density point of}\ \ \  B^*.
       \end{equation}
       
       Consequently for $K:= | (\tilde{g})'(t_0)|+1$ there exists  $0< \alpha < \ep$ such that for each $0<h< \alpha$
        there exists a number $t \in (t_0-h- \ep h, t_0-h) \cap B^*$ such that
        \begin{equation}\label{HVE}
        |f(\vf(t)) - f(\vf(t_0))| \leq K |t-t_0|.
        \end{equation}
        Set $x_0:= \vf(t_0)$ and $L:= 6 K(1+\ep) \delta^{-1}$.
        
        Since $\vf(t_0) \in B$, by (ii) we can find $y \in X$ such that
        \begin{equation}\label{oy}
        h:= 6 \delta^{-1}\|y-x_0\|<\alpha\ \ \ \text{and}\ \ \  \frac{f(y)-f(x_0)}{\|y-x_0\|} < -L.
        \end{equation}
        Since $0<h<\alpha$, we can choose $t \in (t_0-h- \ep h, t_0-h) \cap B^*$ such that \eqref{HVE} holds.
       
       Since $t, t_0 \in B^*$, we can use \eqref{prf} and \eqref{tjd} (with $t_1:=t$ and $t_2:=t_0$) and obtain
       \begin{equation}\label{prf2}
       (1-\ep) h \leq \|x_0- \vf(t)\| \leq (1+\ep)^2 h
       \end{equation}
       and
        \begin{equation}\label{txn}
       \left\|  \frac{x_0-\vf(t)}{\|x_0-\vf(t)\|}-v     \right\| < \frac{\delta}{2}.
       \end{equation}
       Using \eqref{triangle}, \eqref{prf2}  and $\ep< 1/3$, we obtain
       $$ \left\|  \frac{x_0-\vf(t)}{\|x_0-\vf(t)\|} - \frac{y-\vf(t)}{\|y-\vf(t)\|}    \right\|
        \leq  2 \frac{\|y-x_0\|}{\|x_0-\vf(t)\|} \leq \frac{2\|y-x_0\|}{(1-\ep)6 \delta^{-1}\|y-x_0\|}=
         \frac{\delta}{3(1-\ep)} < \frac{\delta}{2}.$$
         Consequently \eqref{txn} implies 
         $$ \left\| \frac{y-\vf(t)}{\|y-\vf(t)\|} - v\right\| < \delta.$$
          Further  \eqref{oy} and \eqref{prf2} imply
          $$  \|y-\vf(t)\| \leq \|x_0-\vf(t)\| +h  < 5h < 5 \ep = \delta.$$
          Thus  $y \in C(\vf(t),v,\delta) \cap B(\vf(t),\delta)$, and since $\vf(t) \in B$, we obtain by (i)
           that  $f(y) \geq f(\vf(t))$. Consequently, using \eqref{oy}, we obtain
           $$ f(x_0) - f(\vf(t)) = (f(x_0) -f(y)) + (f(y)- f(\vf(t))) \geq f(x_0)-f(y) > L \|y-x_0\|.$$
           On the other hand, \eqref{HVE} gives
           $$ f(x_0)- f(\vf(t)) \leq K |t-t_0| < K(1+\ep) h = K (1+\ep) 6 \delta^{-1} \|y-x_0\| =L \|y-x_0\|,$$
            a contradiction.
						
						\underline{Second step}:\ \ Now consider the general case. First observe that we can suppose that $f$ is bounded. Indeed,
						 setting  $f_n:= \max(-n, \min(f,n))$ for $n \in \N$, it is easy to see
						 that  $A(f,v,\delta) \subset \bigcup_{n=1}^{\infty} A(f_n,v,\delta)$.
						
						Set 
						$$  g(x):= \min( f(x), \liminf_{t \to x} f(t)).$$
						Obviously, $g$ is a bounded lower semi-continuous function. The definitions of
						$A(f,v,\delta)$ and $g$ clearly imply that
						\begin{equation}\label{troj}
						A(f,v,\delta) \subset  A(g,v,\delta)  \cup  \{x \in A(f,v,\delta):\ f(x) > g(x)\}.
						\end{equation}
						Now we will show that  $A(g,v,\delta)$ is a Borel set. To this end first consider the set
						$$ M:= \{x \in X:\ g(y) \geq g(x)\ \ \text{whenever}\ \ y \in C(x,v,\delta) \cap B(x,\delta)\}.$$
						We will show that $M$ is closed. To this end consider points $x_n\in M$, $n\in \N$, such
						 that $x_n \to x$. Let $y\in C(x,v,\delta) \cap B(x,\delta)$. It is easy to see that
						 there exists $n_0$ such that, for all $n\geq n_0$,  we have $y\in C(x_n,v,\delta) \cap B(x_n,\delta)$, and consequently  $g(x_n) \leq g(y)$. Since $g$ is lower-semicontinuous, we obtain
 $g(x) \leq g(y)$. So $x\in M$.

Further denote
$$  L:= \{x \in X:\ \liminf_{y \to x}  \frac{g(y)-g(x)}{\|y-x\|} > - \infty\}  \ \ \text{and}$$ 
  	$$L_n:= \{x\in X:\  g(y)-g(x) \geq -n \|y-x\|\ \ \text{if}\ \ \|y-x\| < 1/n\}, \ n \in \N.$$					
		Then clearly  $L= \bigcup_{n=1}^{\infty} L_n$. We will show that each $L_n$ is closed. To this end consider points $x_k\in L_n$, $k\in \N$, such
						 that $x_k \to x$. Let $y \in B(x,1/n)$. It is easy to see that
						 there exists $k_0$ such that, for all $k\geq k_0$,  we have $\|y- x_k\|< 1/n$, and consequently  $g(y)-g(x_k)		 \geq -n \|y-x_k\|$. Consequently
	\begin{multline*}
	g(y)-g(x) \geq g(y) - \liminf_{k \to \infty} g(x_k) = \limsup_{k \to \infty}  (g(y)-g(x_k))\\
	 \geq \limsup_{k \to \infty} (-n\|y-x_k\|) = -n\|y-x\|,
								\end{multline*}						
		and so $x \in L_n$. 
							
		Thus we obtain that $A(g,v,\delta)= M \setminus L$ is  Borel. Using the first step of the
							proof, we obtain   $A(g,v,\delta) \in \tilde C$.
							
							Further consider $ x \in A(f,v,\delta)$ such that  $ f(x) > g(x)$.  Since $f(y) \geq f(x)$ for each
							 $y \in C(x,v,\delta) \cap B(x,\delta)$, we clearly have $g(y) \geq f(x)$ for each
							 $y \in C(x,v,\delta) \cap B(x,\delta)$. So $x \in T(g)$, where $T(g)$ is as in Lemma \ref{spar}. 
							  Therefore $\{x \in A(f,v,\delta):\ f(x) > g(x)\} \subset T(g)$. So, using Lemma \ref{spar} and \eqref{troj},
							   we obtain  $ A(f,v,\delta) \in \ct$.
						           \end{proof}
											
		\begin{lemma}\label{lm}
		Let $X$ be a separable Banach space, $G \subset X$  an open set, $v \in S_X$, $0< \delta<1$, $K>0$ and $f:G \to \R$  an arbitrary function. 
		Let $M(f,v,\delta,K)$ be the set of all $x \in G$,
	for which
	\begin{enumerate}
\item  \ \ \  $f(y) - f(x) \geq -K \|y-x\|$ whenever  $y \in C(x,v, \delta) \cap B(x,\delta)$ and
\item \ \ \ $\liminf_{z \to x}  \frac{f(z)-f(x)}{\|z-x\|} = - \infty.$ 
\end{enumerate}
	Then  $M(f,v,\delta,K)\in \tilde \C$.										
	\end{lemma}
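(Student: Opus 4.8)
The plan is to reduce Lemma \ref{lm} to the already established Lemma \ref{ct} by a linear ``tilt'' that absorbs the slack term $-K\|y-x\|$ in condition (i). Since $v \in S_X$, the Hahn--Banach theorem provides a functional $\ell \in X^*$ with $\|\ell\|=1$ and $\ell(v)=1$. I would set $c := K/(1-\delta)$ and define $g := f + c\,\ell$ on $G$; note that $g$ is again an arbitrary real function on $G$, so Lemma \ref{ct} is applicable to it. The whole proof then consists in showing the inclusion $M(f,v,\delta,K) \subset A(g,v,\delta)$, after which Lemma \ref{ct} gives $A(g,v,\delta) \in \ct$ and the fact that $\ct$ is a $\sigma$-ideal (in particular hereditary) yields $M(f,v,\delta,K) \in \ct$.

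To verify condition (i) of $A(g,v,\delta)$, fix $x \in M(f,v,\delta,K)$ and take $y \in C(x,v,\delta) \cap B(x,\delta)$. Writing $u := (y-x)/\|y-x\|$, the definition of the cone gives $\|v-u\| < \delta$, whence $\ell(u) = 1 - \ell(v-u) \geq 1 - \|v-u\| > 1-\delta$ and therefore $\ell(y-x) = \|y-x\|\,\ell(u) \geq (1-\delta)\|y-x\|$. Combining this with condition (i) for $M$ and the choice $c(1-\delta)=K$ yields $g(y)-g(x) = (f(y)-f(x)) + c\,\ell(y-x) \geq -K\|y-x\| + K\|y-x\| = 0$, which is exactly condition (i) of $A(g,v,\delta)$.

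For condition (ii), observe that $|c\,\ell(z-x)| \leq c\|z-x\|$, so the quotient $c\,\ell(z-x)/\|z-x\|$ stays in $[-c,c]$ as $z \to x$. Since $(g(z)-g(x))/\|z-x\| = (f(z)-f(x))/\|z-x\| + c\,\ell(z-x)/\|z-x\|$ and condition (ii) for $M$ supplies a sequence $z_k \to x$ along which the first summand tends to $-\infty$ while the second remains bounded, we obtain $\liminf_{z\to x}(g(z)-g(x))/\|z-x\| = -\infty$. Hence $x \in A(g,v,\delta)$, which proves the inclusion.

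I do not expect any genuine technical obstacle here. The single idea is the calibration $c = K/(1-\delta)$, dictated precisely by the cone aperture $\delta$: the guaranteed lower bound $(1-\delta)\|y-x\|$ for $\ell(y-x)$ on the cone cancels the slack $-K\|y-x\|$ in (i) exactly. The verification that a Lipschitz linear perturbation preserves the divergence to $-\infty$ in (ii) is routine, so the argument is a short reduction to Lemma \ref{ct} rather than a new technical development.
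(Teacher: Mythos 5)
Your proof is correct and follows essentially the same route as the paper: both reduce to Lemma \ref{ct} by adding a linear functional that is uniformly positive on the cone direction so as to absorb the slack $-K\|y-x\|$, the only difference being that you rescale the functional (taking $c=K/(1-\delta)$ with a norm-one $\ell$) while the paper instead fixes $x^*$ with $x^*(v)>2K$ and shrinks the cone aperture to $\tilde\delta=\min(\delta,K\|x^*\|^{-1})$. Both calibrations work, and your verification of conditions (i) and (ii) of $A(g,v,\delta)$ is sound.
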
									
	\begin{proof}
	Choose  $x^* \in X^*$ with  $x^*(v) > 2K$ and set $g:= f+x^*$, \  $\tilde{\delta}:= \min(\delta, K \|x^*\|^{-1})$. We will show that 	$M(f,v,\delta,K)\subset A(g,v, \tilde{\delta})$, where	 $A(g,v, \tilde{\delta})$ is as in Lemma \ref{ct}. To this end choose an arbitrary $x \in M(f,v,\delta,K)$
	 and  $y \in C(x,v, \tilde{\delta}) \cap B(x, \tilde{\delta})$. Since  $\tilde{\delta} \leq \delta$,
	 we have
	\begin{equation}\label{prne}
	f(y)- f(x) \geq -K \|y-x\|.
	\end{equation}
	We have $\|v- (y-x)\|y-x\|^{-1}\| < \tilde{\delta}$, and therefore
	$$ \left| x^*(v) - \frac{x^*(y)-x^*(x)}{\|y-x\|}\right| < \tilde{\delta}\|x^*\|\leq K.$$
	Consequently
	\begin{equation}\label{drne}
	x^*(y)-x^*(x) \geq x^*(v)\|y-x\| - K\|y-x\| > K\|y-x\|.
	\end{equation}
	Adding \eqref{prne} and \eqref{drne}, we obtain  $g(y) \geq g(x)$.
	Since clearly  $\liminf_{z \to x}  \frac{g(z)-g(x)}{\|z-x\|} = - \infty$,
	 we obtain  $x  \in  A(g,v, \tilde{\delta})$. Thus Lemma \ref{ct} implies $M(f,v,\delta,K)\in \tilde \C$.	
	\end{proof}
		\begin{lemma}\label{hl}
		Let $X$ be a separable Banach space, $G \subset X$  an open set and $f:G \to \R$  an arbitrary function. 
		Let $P_1$ be the set of all $x \in G$,
	for which
	\begin{enumerate}
\item  \ \ \   $f_H^{-}(x,v)> -\infty$ for some $v \in X$ and
\item \ \ \ $\liminf_{z \to x}  \frac{f(z)-f(x)}{\|z-x\|} = - \infty.$ 
\end{enumerate}
	Then  $P_1\in \tilde \C$.										
	\end{lemma}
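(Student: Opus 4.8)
The plan is to show that each point of $P_1$ lies in one of the sets $M(f,v,\delta,K)$ of Lemma~\ref{lm}, and then to cover $P_1$ by countably many such sets. Since each $M(f,v,\delta,K) \in \tilde{\C}$ and $\tilde{\C}$ is a $\sigma$-ideal (Remark~\ref{kct}(a)), this yields $P_1 \in \tilde{\C}$. So fix $x \in P_1$ with a witnessing direction $v \in X$ as in (i). First I would check that $v$ may be taken nonzero and of norm one: if $v=0$ satisfied $f^-_H(x,0) > -\infty$, then Lemma~\ref{hdvn} would give $\liminf_{y \to x} \frac{f(y)-f(x)}{\|y-x\|} > -\infty$, contradicting (ii). Hence the witness is nonzero, and by the positive homogeneity \eqref{hom} we may replace it by $v/\|v\| \in S_X$.

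Next I would translate the finiteness $f^-_H(x,v) > -\infty$ into the cone condition of Lemma~\ref{lm}. Unwinding the definition of the lower Hadamard derivative, there are $K > 0$ and $\eta > 0$ with $\frac{f(x+tz)-f(x)}{t} \geq -K$ whenever $\|z-v\| < \eta$ and $0 < t < \eta$. Now pick any $\delta$ with $0 < \delta < \min(\eta,1)$ and take $y \in C(x,v,\delta) \cap B(x,\delta)$. Writing $t := \|y-x\|$ and $z := (y-x)/\|y-x\| \in S_X$, the cone membership gives $\|z-v\| < \delta < \eta$ and $0 < t < \delta < \eta$, so $f(y)-f(x) = t\cdot\frac{f(x+tz)-f(x)}{t} \geq -K\|y-x\|$. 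Together with condition (ii), which is identical for $P_1$ and for $M$, this shows $x \in M(f,v,\delta,K)$.

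It remains to pass from the (generally uncountable) family of witnessing triples $(v,\delta,K)$ to a countable subfamily, and this discretization of the direction over the sphere $S_X$ is the only genuinely delicate step. I would fix a countable dense set $\{v_j\} \subset S_X$ and, given $x,v,\delta,K$ as above, choose $v_j$ with $\|v_j - v\| < \delta/2$; then the triangle inequality yields the cone inclusion $C(x,v_j,\delta/2) \subset C(x,v,\delta)$, so the inequality of (i) holds for every $y \in C(x,v_j,\delta/2) \cap B(x,\delta/2)$, that is $x \in M(f,v_j,\delta/2,K)$. Since the set $M(f,v_j,\delta',K')$ only grows as $\delta'$ decreases and $K'$ increases (the defining inequality becomes easier, while (ii) is independent of $\delta',K'$), I may select a rational $\delta' \in (0,\delta/2]$ and an integer $K' \geq K$ with $x \in M(f,v_j,\delta',K')$. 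Therefore
$$ P_1 \subset \bigcup_{j \in \N,\ \delta' \in \Q \cap (0,1),\ K' \in \N} M(f,v_j,\delta',K'), $$
a countable union of sets in $\tilde{\C}$ by Lemma~\ref{lm}, whence $P_1 \in \tilde{\C}$. The main work of this lemma has already been done in Lemmas~\ref{ct} and~\ref{lm}; here everything beyond the separability-based covering is routine, and the point to handle carefully is simply the cone-inclusion estimate that lets a nearby rational-indexed direction replace $v$.
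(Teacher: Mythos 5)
Your proposal is correct and follows essentially the same route as the paper: rule out $v=0$ via Lemma \ref{hdvn}, normalize via \eqref{hom}, convert $f_H^-(x,v)>-\infty$ into the cone estimate of Lemma \ref{lm}, and then use a countable dense subset of $S_X$ together with rational radii and integer constants to cover $P_1$ by countably many sets $M(f,v_j,\delta',K')$. The only difference is cosmetic bookkeeping in the discretization (the paper picks $n$ with $\|v_n-v\|<\delta$ and then $m$ with $\|v_n-v\|+1/m<\delta$, where you use $\delta/2$ twice), so nothing further is needed.
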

	\begin{proof}
	  Let $\{v_n: n \in \N\}$ be a dense subset of $S_X$.
		By Lemma \ref{lm} it is sufficient to show (under the notation of  Lemma \ref{lm}) that
		\begin{equation}\label{pb}
		P_1 \subset  \bigcup_{n,m,k=1}^{\infty} M(f,v_n, 1/m, k).
		\end{equation}
		To prove \eqref{pb}, consider an arbitrary $x \in P$. Choose $v \in X$ such that
		$f_H^{-}(x,v)> -\infty$. By \eqref{hom} and Lemma \ref{hdvn} we can (and will) suppose that
		 $v \in S_X$. Choose
		 $k \in \N$
		 such that  $f_H^{-}(x,v)> -k$. We can clearly find $\delta>0$ such that
		$f(y)-f(x) > -k\|y-x\|$ for each $y \in C(x,v,\delta) \cap B(x,\delta)$. Further
		 choose $n\in \N$ such that $\|v_n-v\|< \delta$ and $m \in \N$ such that
		 $\|v_n-v\| + 1/m < \delta$. It is easy to see that $x \in M(f,v_n, 1/m, k)$.
		\end{proof} 
	Applying Lemma \ref{hl} to $-f$, we obtain
	\begin{corollary}\label{hl2}
		Let $X$ be a separable Banach space, $G \subset X$  an open set and $f:G \to \R$  an arbitrary function. 
		Let $P_2$ be the set of all $x \in G$,
	for which
	\begin{enumerate}
\item  \ \ \   $f_H^{+}(x,v)< \infty$ for some $v \in S_X$ and
\item \ \ \ $\limsup_{z \to x}  \frac{f(z)-f(x)}{\|z-x\|} =  \infty.$ 
\end{enumerate}
	Then  $P_2\in \tilde \C$.		
		\end{corollary}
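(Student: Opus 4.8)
The plan is to deduce this directly from Lemma~\ref{hl} applied to $g:=-f$, translating the defining conditions of the exceptional set back to $f$ by the standard sign reversal of one-sided Hadamard derivatives and of difference quotients. Concretely, I would let $P_1(g)$ denote the set described in Lemma~\ref{hl} for the function $g$, so that $P_1(g)\in\tilde\C$, and then match $P_1(g)$ against $P_2$.

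First I would record the two elementary identities coming from the fact that replacing $f$ by $-f$ interchanges $\liminf$ and $\limsup$. Since $f_H^+(x,v)$ and $f_H^-(x,v)$ are defined as the $\limsup$ and $\liminf$ of $(f(x+tz)-f(x))t^{-1}$ as $z\to v$, $t\to 0+$, we get $g_H^-(x,v)=-f_H^+(x,v)$ for every $v$, and likewise $\liminf_{z\to x}\frac{g(z)-g(x)}{\|z-x\|}=-\limsup_{z\to x}\frac{f(z)-f(x)}{\|z-x\|}$. Consequently, condition (i) defining $P_1(g)$, namely $g_H^-(x,v)>-\infty$ for some $v\in X$, is equivalent to $f_H^+(x,v)<\infty$ for some $v\in X$, and condition (ii), namely $\liminf_{z\to x}\frac{g(z)-g(x)}{\|z-x\|}=-\infty$, is equivalent to $\limsup_{z\to x}\frac{f(z)-f(x)}{\|z-x\|}=\infty$.

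These are precisely the conditions (i) and (ii) defining $P_2$, the only discrepancy being that in $P_2$ the direction is restricted to $v\in S_X$ rather than ranging over all of $X$. Since $S_X\subseteq X$, imposing the stronger requirement can only shrink the set, so $P_2\subseteq P_1(g)$. Because $\tilde\C$ is a $\sigma$-ideal (Remark~\ref{kct}(a)) and is therefore closed under passing to subsets, the inclusion $P_2\subseteq P_1(g)\in\tilde\C$ gives $P_2\in\tilde\C$, as required.

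I do not expect any genuine obstacle here; the statement is a formal consequence of Lemma~\ref{hl}. The only points demanding care are getting the signs right in the two identities above and noting that restricting $v$ to the unit sphere merely enlarges the complement of the exceptional set, so that the plain inclusion into $P_1(g)$ (rather than an equality) already suffices for the conclusion.
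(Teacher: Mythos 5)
Your proposal is correct and is exactly the paper's argument: the paper proves Corollary~\ref{hl2} simply by applying Lemma~\ref{hl} to $-f$, which is precisely the sign-reversal you carry out. Your extra observation that restricting $v$ to $S_X$ only shrinks the set, so the inclusion $P_2\subseteq P_1(-f)$ suffices, is a correct handling of the one cosmetic discrepancy between the two statements.
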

										
\section{Subdifferentiability via lower Lipschitzness}				
					
		\begin{proposition}\label{dlsd}
		Let $X$ be a separable Banach space and $\I$ a $\sigma$-ideal of subsets of $X$. Suppose that
   \begin{enumerate}
  \item [(R)]\ \ Every  real Lipschitz function on each open $\emptyset \neq H \subset X$ is Fr\' echet (resp. Hadamard) differentiable  on $H$ except for a set from $\I$.
    \end{enumerate}
 Let $G \subset X$ be an open set, $f$ an arbitrary real function on $G$. Let
  $A$ be the set of all $ x \in G$ such that 
	 $\liminf_{z \to x}  \frac{f(z)-f(x)}{\|z-x\|} > - \infty$ and
$f$ is not Fr\' echet (resp. Hadamard) subdifferentiable at $x$.
 Then  $A \in \I$.
  \end{proposition}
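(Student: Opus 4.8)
The plan is to run Mal\'y's method \cite{Ma} for passing from a Rademacher-type hypothesis to a Stepanov-type conclusion, carrying out only the ``subdifferentiable half''. The first step is to replace the pointwise lower-Lipschitz condition by a uniform one. For $m,k\in\N$ let $A_{m,k}$ be the set of all $x\in G$ such that $f(z)-f(x)\ge -k\|z-x\|$ for every $z\in B(x,1/m)$. Since $\liminf_{z\to x}(f(z)-f(x))\|z-x\|^{-1}>-\infty$ means precisely that such an estimate holds with some constant on some ball, we have $A\subset\bigcup_{m,k}A_{m,k}$, and it suffices to prove $A\cap A_{m,k}\in\I$ for each fixed $m,k$.

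The core of the argument is a local inf-convolution that regularizes $f$ into a Lipschitz function which still agrees with $f$ on $A_{m,k}$. Fix $m,k$, let $\{p_i\}$ be a countable dense subset of $A_{m,k}$, and choose balls $B_i=B(p_i,r_i)\subset G$ with $r_i\le 1/(3m)$ in such a way that $\{B_i\}$ still covers $A_{m,k}$ (possible since each $p_i\in A_{m,k}\subset G$); note $\diam B_i<1/m$. Define $g_i(x):=\inf_{z\in B_i}\bigl(f(z)+k\|z-x\|\bigr)$ for $x\in B_i$. The decisive observation---this is exactly where the defining property of $A_{m,k}$ is used---is that applying the estimate at the centre $p_i\in A_{m,k}$ gives $f(z)\ge f(p_i)-k\|z-p_i\|\ge f(p_i)-kr_i$ for all $z\in B_i$; hence $f$ is bounded below on $B_i$, the infimum defining $g_i$ is finite, and $g_i$, being a pointwise infimum of $k$-Lipschitz functions of $x$ that is everywhere finite, is itself $k$-Lipschitz on $B_i$. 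Moreover $g_i\le f$ on $B_i$ (take $z=x$), while for $x\in A_{m,k}\cap B_i$ the estimate $f(z)+k\|z-x\|\ge f(x)$ (valid because $z\in B_i\subset B(x,1/m)$) combined with the choice $z=x$ forces $g_i(x)=f(x)$.

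The final step is to invoke hypothesis (R): since $g_i$ is Lipschitz on the open set $B_i$, it is Fr\'echet (resp. Hadamard) differentiable on $B_i\setminus N_i$ for some $N_i\in\I$. If $x\in A_{m,k}\cap B_i\setminus N_i$ and $L:=g_i'(x)$, then from $g_i\le f$ on $B_i$ and $g_i(x)=f(x)$ we get $f(y)-f(x)-L(y-x)\ge g_i(y)-g_i(x)-L(y-x)$ for $y$ near $x$; in the Fr\'echet case this directly yields $\liminf_{y\to x}(f(y)-f(x)-L(y-x))\|y-x\|^{-1}\ge 0$, and in the Hadamard case it yields $f^-_H(x,v)\ge (g_i)^-_H(x,v)=L(v)$ for every $v$. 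In either case $L$ is a subgradient of $f$ at $x$ of the required type, so $f$ is subdifferentiable at $x$ and $x\notin A$. Hence $A\cap A_{m,k}\cap B_i\subset N_i$, and taking the countable union over $m,k,i$ and using that $\I$ is a $\sigma$-ideal gives $A\in\I$. The only genuinely delicate points are the finiteness and Lipschitz continuity of $g_i$---secured precisely by the uniform lower bound for $f$ on $B_i$ noted above---and the uniform treatment of the Fr\'echet and Hadamard cases, the latter resting on the elementary comparison of the one-sided Hadamard derivatives of $g_i$ and $f$.
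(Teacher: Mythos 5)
Your proposal is correct and follows essentially the same route as the paper: decompose $A$ via the uniformly lower-Lipschitz sets, localize to pieces of diameter less than $1/m$, form the largest $k$-Lipschitz minorant (your explicit inf-convolution $g_i$ coincides with the paper's supremum $h_{n,k}$ of Lipschitz minorants), apply hypothesis (R) to it, and read off a subgradient of $f$ from the sandwich $g_i\le f$, $g_i(x)=f(x)$. The only cosmetic difference is that the paper covers $X$ by arbitrary open sets of small diameter rather than by balls centred at a dense subset of $A_{m,k}$, and secures finiteness of the minorant from any single point of $A^n_k$ rather than from the ball's centre.
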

  \begin{proof}
  For each  $n \in \N$, set
  $$  L_n: = \{x \in G:\ f(y)-f(x) \geq  -n \|y-x\|\ \ \text{whenever}\ \ \|y-x\| \leq 1/n\}.$$
	 Let $(U^n_k)_{k \in \N}$ be a cover of $X$ with open sets  with diameter smaller than $1/n$   and set  $A^n_k : = A \cap L_n \cap U^n_k$. Since clearly 
    $A = \bigcup_{n,k=1}^{\infty}\, A^n_k$, it is sufficient to prove that each  $A^n_k$ belongs to $\I$.
		
		So suppose that natural $n$, $k$ with $A^n_k\neq \emptyset$ are fixed.   Set 
         $$ h_{n,k}(x):= \sup \{h(x): h\ \text{is Lipschitz with constant}\ n \ \text{on} \  U^n_k \ \text{and}\ 
          h \leq f\ \text{on}\  U^n_k\}.$$
          Consider, for $a \in A^n_k$, the function
          $$ h_a(x):= f(a)- n \|x-a\|,\ x \in  U^n_k.$$
           Clearly each $h_a$ is Lipschitz with constant $n$   and, since $A^n_k \subset L_n$ and $\diam  U^n_k < 1/n$, we have $h_a \leq f$ on $ U^n_k $.  Consequently we obtain that $ h_{n,k}$ is finite and Lipschitz with constant $n$ on $U^n_k$,
            \begin{equation}\label{mezi}
            h_{n,k}  \leq f \ \ \text{on}\ \ \ \ U^n_k \ \ \ \text{and}\ \ \  \ \ h_{n,k}(a) = f(a)
           \ \ \text{for each}\ \ a \in A^n_k .
                        \end{equation}
																By (R) there exists $N^n_k \in \I$ such that  $h_{n,k}$ is Fr\' echet (resp. Hadamard)
			differentiable at each point of $U^n_k \setminus N^n_k$.	Using \eqref{mezi}, we easily obtain
			 (see, e.g., \cite[Proposition 4.7
			]{Pe}) that $f$ is  Fr\' echet (resp. Hadamard)  subdifferentiable at 
			 each point of $A^n_k \setminus N^n_k$. Therefore  $(A^n_k \setminus N^n_k) \cap A^n_k = 
			\emptyset$. Consequently $A^n_k \subset N^n_k$, and so $A^n_k \in \I$. 
          \end{proof}
			\begin{corollary}\label{s1}
					Let $X$ be a Banach space such that
					\begin{enumerate}
\item[(a)] $X^*$  is separable, and
\item[(b)] each porous subset of $X$ is $\Gamma$-null.
\end{enumerate}
 Let $G \subset X$ be an open set and $f$ an arbitrary real function on $G$. Let
  $S_1$ be the set of all $ x \in G$ such that either
	\begin{enumerate}
	\item
	 $\liminf_{z \to x}  \frac{f(z)-f(x)}{\|z-x\|} > - \infty$ and
$f$ is not Fr\' echet  subdifferentiable at $x$ or
\item
$\limsup_{z \to x}  \frac{f(z)-f(x)}{\|z-x\|} < \infty$ and
$f$ is not Fr\' echet  superdifferentiable at $x$.
\end{enumerate}
 Then $S_1$ is a $\Gamma$-null set. 
		\end{corollary}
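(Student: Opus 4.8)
The plan is to realize $S_1$ as the union of the two exceptional sets delivered by Proposition \ref{dlsd} in its Fr\'echet form, with the $\sigma$-ideal $\I$ taken to be the family of $\Gamma$-null subsets of $X$. Concretely, I would apply Proposition \ref{dlsd} once to $f$ (to capture case (i)) and once to $-f$ (to capture case (ii)), and then combine.

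The key preliminary step is to check that hypothesis (R) of Proposition \ref{dlsd} holds for this $\I$ and for Fr\'echet differentiability; this is precisely the assertion that a $\Gamma$-almost everywhere Fr\'echet Rademacher theorem is valid on $X$, which is exactly what the two standing assumptions are designed to guarantee. By (a), $X$ has separable dual, and by (b) every porous subset of $X$ is $\Gamma$-null, so the theorem of Lindenstrauss and Preiss \cite{LP} shows that every real Lipschitz function on $X$ is Fr\'echet differentiable outside a $\Gamma$-null set. To obtain the statement for an arbitrary open $\emptyset \neq H \subset X$ as literally demanded by (R), I would extend a real Lipschitz function on $H$ to a real Lipschitz function on all of $X$ (for instance by the McShane formula, which preserves the Lipschitz constant) and use that Fr\'echet differentiability is a local property: differentiability of the extension at $\Gamma$-almost every point of $H$ then yields the same for the original function on $H$. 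Thus (R) holds.

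With (R) available, Proposition \ref{dlsd} applied to $f$ shows that the set of $x$ satisfying (i), namely those $x$ with $\liminf_{z\to x}(f(z)-f(x))/\|z-x\| > -\infty$ at which $f$ fails to be Fr\'echet subdifferentiable, is $\Gamma$-null. For (ii) I would apply the same proposition to $-f$. Here two routine translations are used: first, $\liminf_{z\to x}(-f(z)+f(x))/\|z-x\| = -\limsup_{z\to x}(f(z)-f(x))/\|z-x\|$, so the condition $\liminf > -\infty$ for $-f$ is exactly $\limsup < \infty$ for $f$; second, by the definition of supergradients, $L$ is a Fr\'echet supergradient of $f$ at $x$ if and only if $-L$ is a Fr\'echet subgradient of $-f$ at $x$, so failure of Fr\'echet superdifferentiability of $f$ is failure of Fr\'echet subdifferentiability of $-f$. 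Hence the set of $x$ satisfying (ii) is $\Gamma$-null as well.

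Finally, $S_1$ is the union of these two $\Gamma$-null sets, and since the $\Gamma$-null sets form a $\sigma$-ideal the union is again $\Gamma$-null, which is the claim. The only substantive point is the verification of (R); once the correct Rademacher theorem is invoked under (a) and (b), the remainder is bookkeeping together with the duality between sub- and superdifferentiability, so I expect no real obstacle beyond correctly citing and localizing the Lindenstrauss--Preiss result.
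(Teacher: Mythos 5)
Your proposal is correct and follows essentially the same route as the paper: take $\I$ to be the $\sigma$-ideal of $\Gamma$-null sets, verify hypothesis (R) for Fr\'echet differentiability from the Lindenstrauss--Preiss theorem under assumptions (a) and (b), and apply Proposition \ref{dlsd} to $f$ and to $-f$, writing $S_1$ as the union of the two resulting $\Gamma$-null sets. The extra details you supply (McShane extension for localizing (R), and the liminf/limsup and sub-/superdifferentiability dualities) are exactly the routine steps the paper leaves implicit.
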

	\begin{proof}
					Let $\I$ be the $\sigma$-ideal of $\Gamma$-null sets in $X$.
					Then the condition (R) of Proposition \ref{dlsd}  for Fr\' echet differentiability
					 holds (see \cite{LP} or \cite{LPT}).
					Applying 
					Proposition \ref{dlsd} to $f$ and $-f$, we obtain $\Gamma$-null sets $A_1$ and $A_2$, respectively. So $S_1 = A_1 \cup A_2$ is $\Gamma$-null.
					\end{proof}
			\begin{corollary}\label{s2}
			Let $X$ be a separable Banach space.  Let $G \subset X$ be an open set and $f$ an arbitrary real function on $G$. Let
  $S_2$ be the set of all $ x \in G$ such that either
	\begin{enumerate}
	\item
	 $\liminf_{z \to x}  \frac{f(z)-f(x)}{\|z-x\|} > - \infty$ and
$f$ is not Hadamard   subdifferentiable at $x$ or
\item
$\limsup_{z \to x}  \frac{f(z)-f(x)}{\|z-x\|} < \infty$ and
$f$ is not Hadamard   superdifferentiable at $x$.
\end{enumerate}
	 Then $S_2 \in\tilde{\C}$. 
		\end{corollary}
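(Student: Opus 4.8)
The plan is to mirror the proof of Corollary \ref{s1} verbatim, simply replacing Fr\'echet differentiability by Hadamard differentiability and the $\sigma$-ideal of $\Gamma$-null sets by the $\sigma$-ideal $\tilde{\C}$. First I would set $\I := \tilde{\C}$, which is a genuine $\sigma$-ideal by Remark \ref{kct}(a), so that Proposition \ref{dlsd} becomes applicable as soon as its hypothesis (R) is verified for the Hadamard case.

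The crucial point is to check condition (R) of Proposition \ref{dlsd} for Hadamard differentiability with $\I = \tilde{\C}$: every real Lipschitz function on an arbitrary nonempty open $H \subset X$ must be Hadamard differentiable on $H$ outside a $\tilde{\C}$-null set. Here I would invoke the fact recalled in Subsection~2.2 that for a locally Lipschitz function Hadamard differentiability coincides with G\^ateaux differentiability, so it suffices to have the corresponding G\^ateaux--Rademacher theorem with exceptional set in $\tilde{\C}$. This is (a weak form of) the main theorem of \cite{PZ}, whose exceptional set lies in $\at \subset \tilde{\C}$ (Remark \ref{kct}(d)); hence (R) holds.

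With (R) in hand I would apply Proposition \ref{dlsd} twice. Applied to $f$ it yields a set $A_1 \in \tilde{\C}$ containing every $x$ of type (i), that is, every $x$ with $\liminf_{z\to x}\frac{f(z)-f(x)}{\|z-x\|}>-\infty$ at which $f$ is not Hadamard subdifferentiable. Applied to $-f$, and using that $f$ is Hadamard superdifferentiable at $x$ if and only if $-f$ is Hadamard subdifferentiable at $x$, together with the identity $\liminf_{z\to x}\frac{(-f)(z)-(-f)(x)}{\|z-x\|} = -\limsup_{z\to x}\frac{f(z)-f(x)}{\|z-x\|}$, it yields a set $A_2 \in \tilde{\C}$ containing every $x$ of type (ii). Since $\tilde{\C}$ is a $\sigma$-ideal, I conclude $S_2 = A_1 \cup A_2 \in \tilde{\C}$.

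The only genuine obstacle is the verification of (R): one must be certain that the Rademacher-type theorem being cited produces an exceptional set that is actually $\tilde{\C}$-null, rather than merely Aronszajn null or Haar null, which form the strictly larger ideal noted in Remark \ref{kct}(c). This is exactly what the inclusion $\at \subset \tilde{\C}$ and the control of the non-differentiability set within $\at$ in \cite{PZ} provide; everything else is the formal bookkeeping already carried out for Corollary \ref{s1}.
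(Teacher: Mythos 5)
Your proposal is correct and follows essentially the same route as the paper: verify hypothesis (R) of Proposition \ref{dlsd} for Hadamard differentiability with $\I=\tilde{\C}$ via the G\^ateaux--Rademacher theorem of \cite{PZ} (exceptional set in $\at\subset\tilde{\C}$) combined with the equivalence of G\^ateaux and Hadamard differentiability for Lipschitz functions, then apply the proposition to $f$ and $-f$ and take the union. The paper's proof is the same argument, citing \cite[Theorem 12 and Proposition 13]{PZ} for (R).
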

	\begin{proof}
	Let  $\I := \tilde{\C}$. Then \cite[Theorem 12 and Proposition 13]{PZ} imply that the condition (R)  of Proposition \ref{dlsd} for G\^ ateaux
					differentiability holds. However, since (R) deals with Lipschitz functions, it holds
					 for Hadamard differentiability as well (see, e.g., \cite[Proposition 2.25]{Pe}).
	Applying 
					Proposition \ref{dlsd} to $f$ and $-f$, we obtain  sets $A_1\in \tilde{\C}$ and $A_2\in \tilde{\C}$, respectively. So $S_2 = A_1 \cup A_2 \in \tilde{\C}$.
	\end{proof}

					\section{Main results}

	\begin{theorem}\label{dys1}
	Let $X$ be a Banach space such that 
\begin{enumerate}
\item[(a)] $X^*$  is separable, and
\item[(b)] each porous subset of $X$ is $\Gamma$-null.
\end{enumerate}
 Let $G \subset X$ be an open set and $f$ an arbitrary real function on $G$.
 Then there exists a $\Gamma$-null set $A \subset G$
 such that, for each $x \in G \setminus A$, one from the following
	 assertions holds:
	\begin{enumerate}
\item  $f$ is Fr\' echet differentiable at $x$.
	\item  $f_H^{+}(x,v) = \infty$ for each $v \in X$  and  $f_H^{-}(x,v)= -\infty$ for each $v \in X$.
\item  $f_H^{+}(x,v)= \infty$ for each $v \in X$,\  $L(v): = f_H^{-}(x,v) \in X^*$
	and  $L$ is a Fr\' echet subgradient of $f$ at $x$.
\item $f_H^{-}(x,v)= -\infty$ for each $v \in X$,\  $L(v): = f_H^{+}(x,v) \in X^*$
	and  $L$ is a Fr\' echet supergradient of $f$ at $x$.	
	\end{enumerate}
		\end{theorem}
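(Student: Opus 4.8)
The plan is to build the exceptional set from the auxiliary results of Sections~3 and~4 together with Theorem~I, and then, at a point lying outside this set, to run a four-fold case analysis governed by whether $f$ is ``one-sidedly Lipschitz'' from below and from above. Concretely, I would first collect four $\Gamma$-null sets. Let $H$ be the sparse set furnished by Theorem~I; by \eqref{spdp} and \eqref{dpct} it belongs to $\tilde{\C}$, hence is $\Gamma$-null by Remark~\ref{kct}(c), and off $H$ one has, for every $v\in X$,
$$ f_H^{-}(x,v)+f_H^{-}(x,-v)\le 0 \qquad\text{and}\qquad f_H^{+}(x,v)+f_H^{+}(x,-v)\ge 0. $$
Let $P_1,P_2\in\tilde{\C}$ be the sets of Lemma~\ref{hl} and Corollary~\ref{hl2} (again $\Gamma$-null by Remark~\ref{kct}(c)), and let $S_1$ be the $\Gamma$-null set of Corollary~\ref{s1}, which is exactly where the hypotheses (a) and (b) are consumed. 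Put $A:=H\cup P_1\cup P_2\cup S_1$, a $\Gamma$-null set. Then I fix $x\in G\setminus A$ and abbreviate the two ``lower Lipschitz'' conditions
$$ (\alpha):\ \liminf_{z\to x}\frac{f(z)-f(x)}{\|z-x\|}>-\infty, \qquad (\beta):\ \limsup_{z\to x}\frac{f(z)-f(x)}{\|z-x\|}<\infty. $$

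The main step is the mixed case, say $(\alpha)$ holds but $(\beta)$ fails. Since then $\limsup_{z\to x}\frac{f(z)-f(x)}{\|z-x\|}=\infty$ and $x\notin P_2$, the defining alternative of $P_2$ forces $f_H^{+}(x,v)=\infty$ for every $v\in S_X$, hence for every $v\in X$ by \eqref{hom} together with the $\limsup$ analogue of Lemma~\ref{hdvn} (applied to $-f$) for the value at $v=0$. On the other hand $(\alpha)$ and $x\notin S_1$ yield a Fréchet subgradient $L\in X^*$ of $f$ at $x$; by Fact~\ref{ssdi}(i) it is a Hadamard subgradient, i.e.\ $L(v)\le f_H^{-}(x,v)$ for all $v$. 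The crux is to upgrade this to equality: using the first Ioffe inequality off $H$ one gets $f_H^{-}(x,v)\le -f_H^{-}(x,-v)\le -L(-v)=L(v)$, whence $f_H^{-}(x,v)=L(v)$ for all $v\in X$. This is precisely alternative (iii). The symmetric case ``$(\beta)$ holds, $(\alpha)$ fails'' is handled by applying this argument to $-f$ and translating ($f_H^{+}$ in direction $v$ corresponding to $-f_H^{-}$ for $-f$), giving alternative (iv) with the Fréchet supergradient.

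The two remaining cases are routine. If both $(\alpha)$ and $(\beta)$ hold, then $f$ is Lipschitz at $x$, so $x\notin S_1$ makes $f$ simultaneously Fréchet subdifferentiable and superdifferentiable at $x$, and Fact~\ref{ssdi}(ii) gives Fréchet differentiability, which is alternative (i). If both $(\alpha)$ and $(\beta)$ fail, then $x\notin P_1$ forces $f_H^{-}(x,v)=-\infty$ and $x\notin P_2$ forces $f_H^{+}(x,v)=\infty$ for every $v\in X$ (the values at $v=0$ coming from Lemma~\ref{hdvn} and its $\limsup$ analogue), which is alternative (ii). Since the four cases are exhaustive, every $x\in G\setminus A$ satisfies one of (i)--(iv).

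I expect the only genuinely delicate point to be the identification in the mixed case: the Fréchet subgradient supplies merely the one-sided estimate $L(v)\le f_H^{-}(x,v)$, and it is Ioffe's inequality (Theorem~I) that furnishes the matching upper bound, thereby simultaneously forcing the linearity of $v\mapsto f_H^{-}(x,v)$ and showing that $L$ realizes it. Everything else is bookkeeping over the disjunction $(\alpha)/(\beta)$, the substantial technical weight having already been discharged in Lemma~\ref{ct} and Corollary~\ref{s1}.
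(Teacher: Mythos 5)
Your proposal is correct and follows essentially the same route as the paper: the exceptional set $A=H\cup S_1\cup P_1\cup P_2$ is identical, and the four-fold case analysis (yours indexed by the Lipschitz conditions $(\alpha)/(\beta)$, the paper's by the conditions (C1)/(C2), which coincide off $A$ precisely via $P_1$, $P_2$ and Lemma \ref{hdvn}) reduces to the same deductions. In particular your key step in the mixed case --- combining the subgradient inequality $L(v)\le f_H^{-}(x,v)$ with Ioffe's inequality $f_H^{-}(x,v)+f_H^{-}(x,-v)\le 0$ to force equality --- is exactly the paper's argument, merely stated directly rather than by contradiction.
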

		\begin{proof}
Let $H$ be the set from Theorem I (at the end of Preliminaries), $S_1$ the set from Corollary \ref{s1}
 and $P_1$, $P_2$ the sets from Lemma \ref{hl} and Corollary \ref{hl2}. Set  $A:= H \cup S_1 \cup P_1 \cup P_2$.  Then $A$ is $\Gamma$-null by  \eqref{spdp}, \eqref{dpct} and Remark \ref{kct}(c). Let $x\in G \setminus A$ be given. Consider the conditions
$$  (C1)\ \ f_H^{+}(x,v)= \infty,\ v \in X,\ \ \ \ \ \ \text{and}\ \ \ \ \ \ (C2)\ \ f_H^{-}(x,v)= -\infty,\ v \in X.$$
 There are four possibilities.

$(i)^*$\ \ Neither (C1) nor (C2) hold. Since $x \notin P_1 \cup P_2$, we have that
$$ \liminf_{z \to x}  \frac{f(z)-f(x)}{\|z-x\|} > - \infty\ \ \ \text{and}\ \ \ 
\limsup_{z \to x}  \frac{f(z)-f(x)}{\|z-x\|} < \infty.$$ 
 So, since $x \notin S_1$, we obtain that
 $f$ is both Fr\' echet subdifferentiable at $x$ and  Fr\' echet superdifferentiable at $x$.
So  (i) holds by Fact \ref{ssdi}(ii).

$(ii)^*$\ \  Both (C1) and (C2) hold. In other words,  (ii) holds.

$(iii)^*$\ \ (C1) holds and (C2) does not hold. Since $x \notin P_1 \cup S_1$, we obtain as 
in the case $(i)^*$ that $f$ is Fr\' echet subdifferentiable at $x$. Let $L$ be a  Fr\' echet subgradient of $f$ at $x$.
Fact \ref{ssdi}(i) implies that $f_H^{-}(x,v) \geq L(v)$ for each $v \in X$. To prove that $f_H^{-}(x,v) = L(v)$ for each $v \in X$, suppose to the contrary that $f_H^{-}(x,v) > L(v)$ for some $v\in X$. Then
$$ f_H^{-}(x,v) + f_H^{-}(x,-v) >L(v)+L(-v)=0,$$
 which contradicts $x \notin H$. So  (iii) holds.

$(iv)^*$\ \ (C2) holds and (C1) does not hold. Then, quite analogously as in the case $(iii)^*$,
 we obtain that (iv) holds.
\end{proof}

\begin{remark}\label{sppr}
The assumptions (a) and (b) are satisfied (see \cite{LP} or \cite{LPT}) if
\begin{enumerate}
\item $X$ is a subspace of $c_0$, or
	\item $X=C(K)$, where $K$ is a separable compact space, or
	\item  $X$ is the Tsirelson space.
	\end{enumerate}
\end{remark}
					
\begin{theorem}\label{dys2}
	Let $X$ be a separable Banach space.  Let $G \subset X$ be an open set and $f$ an arbitrary real function on $G$.
Then there exists a  set $A \in \tilde{\C}$
 such that, for each $x \in G \setminus A$, one from the following
	 assertions holds:
	\begin{enumerate}
\item  $f$ is Hadamard differentiable at $x$.
	\item  $f_H^{+}(x,v)= \infty$ for each $v \in X$  and  $f_H^{-}(x,v)= -\infty$ for each $v \in X$.
\item  $f_H^{+}(x,v)= \infty$ for each $v \in X$ and $L(v): = f_H^{-}(x,v) \in X^*$.
\item $f_H^{-}(x,v)= -\infty$ for each $v \in X$ and  $L(v): = f_H^{+}(x,v)\in X^*$.	
	\end{enumerate}
		\end{theorem}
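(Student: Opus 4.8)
The plan is to transcribe the proof of Theorem \ref{dys1} almost verbatim, replacing Fr\'echet differentiability and subdifferentiability by their Hadamard versions and the $\sigma$-ideal of $\Gamma$-null sets by $\tilde{\C}$. Every ingredient needed for this has already been prepared in the appropriate form: Corollary \ref{s2} supplies the exceptional set $S_2 \in \tilde{\C}$ for Hadamard sub- and superdifferentiability, Lemma \ref{hl} and Corollary \ref{hl2} supply $P_1, P_2 \in \tilde{\C}$, and Theorem I supplies a sparse set $H$, which lies in $\tilde{\C}$ by \eqref{spdp} and \eqref{dpct}.

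First I would put $A := H \cup S_2 \cup P_1 \cup P_2$ and note that $A \in \tilde{\C}$ since $\tilde{\C}$ is a $\sigma$-ideal (Remark \ref{kct}(a)). Fixing $x \in G \setminus A$, I would introduce the conditions $(C1)$: $f_H^{+}(x,v) = \infty$ for all $v \in X$, and $(C2)$: $f_H^{-}(x,v) = -\infty$ for all $v \in X$, and distinguish four cases. If both hold we are in alternative (ii). If neither holds, then the failure of $(C2)$ gives some $v$ with $f_H^{-}(x,v) > -\infty$, so $x \notin P_1$ forces $\liminf_{z \to x}(f(z)-f(x))/\|z-x\| > -\infty$; symmetrically $x \notin P_2$ forces the corresponding $\limsup$ to be finite, so $f$ is Lipschitz at $x$. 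Then $x \notin S_2$ yields Hadamard sub- and superdifferentiability simultaneously, and Fact \ref{ssdi}(ii) gives Hadamard differentiability, i.e. alternative (i).

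For the two mixed cases, which are interchanged by $f \mapsto -f$, consider $(C1)$ holding and $(C2)$ failing. As above, $x \notin P_1$ gives a finite lower Lipschitz bound, and then $x \notin S_2$ makes $f$ Hadamard subdifferentiable; let $L \in X^*$ be a Hadamard subgradient, so that $L(v) \leq f_H^{-}(x,v)$ for every $v$ by the very definition of Hadamard subgradient. Were the inequality strict for some $v$, then $f_H^{-}(x,v) + f_H^{-}(x,-v) > L(v) + L(-v) = 0$, contradicting $x \notin H$ through Theorem I; hence $f_H^{-}(x,\cdot) = L \in X^*$, which is alternative (iii), and the reflected argument gives (iv).

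I do not anticipate a genuine obstacle: the technical weight of the result sits entirely in Lemma \ref{ct} and in the construction behind Corollary \ref{s2}, both already available. If anything, the Hadamard case is cleaner than the Fr\'echet case of Theorem \ref{dys1}, because a Hadamard subgradient satisfies $L(v) \leq f_H^{-}(x,v)$ directly, so no analogue of the passage through Fact \ref{ssdi}(i) is required. The only point demanding a line of care is the verification that $H$ belongs to $\tilde{\C}$, which is precisely the role of \eqref{spdp} and \eqref{dpct}.
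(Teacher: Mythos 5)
Your proposal is correct and is exactly the paper's own proof: the author proves Theorem \ref{dys2} by declaring that one repeats the proof of Theorem \ref{dys1} with ``Fr\'echet'' replaced by ``Hadamard'', ``$\Gamma$-null'' by ``$\tilde{\C}$-null'', and $S_1$ by $S_2$, noting (as you do) that Fact \ref{ssdi}(i) is no longer needed because the definition of a Hadamard subgradient gives $L(v)\leq f_H^{-}(x,v)$ directly. Your observation that $H\in\tilde{\C}$ follows from \eqref{spdp} and \eqref{dpct} likewise matches the paper.
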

\begin{proof}
We obtain the proof, if we replace in the proof of Theorem \ref{dys1}:
\begin{enumerate}
\item ``Fr\' echet'' by ``Hadamard'',
\item ``$\Gamma$-null'' by  $\tilde{\C}$-null'' and
\item  ``$S_1$'' by ``$S_2$'' (where $S_2$ is the set from Corollary \ref{s2}).
 \end{enumerate}
(Of course, now we do not use Remark \ref{kct}(c) and Fact \ref{ssdi}(i).)
\end{proof}
Now we state explicitly generalizations of Stepanov theorem, which immediately follow
 from the above theorems.
\begin{corollary}\label{incn}
Let $X$, $G$ and $f$ be as in Theorem \ref{dys1}.
 Let $\tilde S(f)$ be the set of all $x\in G$, for which there exist $v_1,v_2\in S_X$
 and $\delta_1>0$, $\delta_2>0$ such that 
$$ \limsup_{y \to x, y \in C(x,v_1,\delta_1)} \frac{f(y)-f(x)}{\|y-x\|} < \infty
\ \ \text{and}\ \   \liminf_{y \to x, y \in C(x,v_2,\delta_2)} \frac{f(y)-f(x)}{\|y-x\|} > - \infty.$$
 Then there exists a $\Gamma$-null set $N\subset X$ such that $f$ is Fr\' echet differentiable at all points of $\tilde S(f)\setminus N$.
\end{corollary}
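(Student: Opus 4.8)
The plan is to read Corollary \ref{incn} off Theorem \ref{dys1} almost directly: I would take $N$ to be exactly the $\Gamma$-null exceptional set $A$ produced by Theorem \ref{dys1}, and then show that at every point of $\tilde S(f)\setminus A$ the alternatives (ii), (iii), (iv) of that theorem are impossible, so that alternative (i), Fr\'echet differentiability, must hold. The only genuine content is a comparison between the cone-restricted difference quotients appearing in the definition of $\tilde S(f)$ and the Hadamard one-sided directional derivatives $f_H^{+}(x,v_1)$, $f_H^{-}(x,v_2)$.

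The key observation I would isolate first is the following: if $v\in S_X$, $\delta>0$ and $\limsup_{y\to x,\,y\in C(x,v,\delta)}\frac{f(y)-f(x)}{\|y-x\|}<\infty$, then $f_H^{+}(x,v)<\infty$; symmetrically, a finite cone-$\liminf$ in a direction $v_2$ forces $f_H^{-}(x,v_2)>-\infty$. To prove the first I would take arbitrary sequences $z_n\to v$ and $t_n\to0+$ and put $y_n:=x+t_nz_n$. Then $\frac{y_n-x}{\|y_n-x\|}=\frac{z_n}{\|z_n\|}\to v$ (here $\|v\|=1$ is used), so $y_n\to x$ and $y_n\in C(x,v,\delta)$ for all large $n$. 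Writing $\frac{f(x+t_nz_n)-f(x)}{t_n}=\|z_n\|\cdot\frac{f(y_n)-f(x)}{\|y_n-x\|}$ and using $\|z_n\|\to1$, I would pass to a subsequence realizing the relevant limit and conclude that it cannot exceed the finite cone-$\limsup$; taking the supremum over all admissible sequences gives $f_H^{+}(x,v)<\infty$.

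With this in hand the exclusion is immediate. For $x\in\tilde S(f)$ I would fix $v_1,v_2\in S_X$ and $\delta_1,\delta_2>0$ as in the definition; the observation yields $f_H^{+}(x,v_1)<\infty$ and $f_H^{-}(x,v_2)>-\infty$. The existence of a direction $v_1$ with $f_H^{+}(x,v_1)<\infty$ contradicts the clause ``$f_H^{+}(x,v)=\infty$ for each $v\in X$'' occurring in both (ii) and (iii), while $f_H^{-}(x,v_2)>-\infty$ contradicts the clause ``$f_H^{-}(x,v)=-\infty$ for each $v\in X$'' occurring in (iv) (and again in (ii)). Hence for $x\in\tilde S(f)\setminus A$ only alternative (i) survives, so $f$ is Fr\'echet differentiable there, and the corollary holds with $N=A$.

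The main obstacle is precisely the comparison in the second paragraph: one must check that multiplying the cone difference quotients by $\|z_n\|\to1$ does not spoil the bound, and in particular that a subsequential limit $+\infty$ of $\frac{f(y_n)-f(x)}{t_n}$ would force $\frac{f(y_n)-f(x)}{\|y_n-x\|}\to+\infty$, contradicting finiteness of the cone-$\limsup$. Everything else is a direct reading of Theorem \ref{dys1}, and no null sets beyond $A$ are needed.
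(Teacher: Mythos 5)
Your proposal is correct and is exactly the argument the paper intends: the paper states Corollary \ref{incn} as an immediate consequence of Theorem \ref{dys1} without further detail, and your choice $N=A$ together with the routine comparison showing that a finite cone-$\limsup$ (resp.\ cone-$\liminf$) in a unit direction forces $f_H^{+}(x,v_1)<\infty$ (resp.\ $f_H^{-}(x,v_2)>-\infty$), thereby excluding alternatives (ii)--(iv), is precisely the omitted verification.
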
  
\begin{corollary}\label{inse}
Let $X$, $G$ and $f$ be as in Theorem \ref{dys2}.
 Let $\tilde S(f)$ be the set of all $x\in G$, for which there exist $v_1,v_2\in S_X$
 and $\delta_1>0$, $\delta_2>0$ such that 
$$ \limsup_{y \to x, y \in C(x,v_1,\delta_1)} \frac{f(y)-f(x)}{\|y-x\|} < \infty
\ \ \text{and}\ \   \liminf_{y \to x, y \in C(x,v_2,\delta_2)} \frac{f(y)-f(x)}{\|y-x\|} > - \infty.$$
 Then there exists a  set $N\in \ct(X)$ such that $f$ is Hadamard differentiable at all points of $\tilde S(f)\setminus N$.
\end{corollary}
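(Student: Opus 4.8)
The plan is to read the corollary off directly from Theorem~\ref{dys2}, taking for $N$ the exceptional set $A\in\tilde{\C}$ furnished there. Since $A\in\tilde{\C}=\ct$, the entire content is to verify that at every point $x\in\tilde S(f)\setminus A$ the two cone hypotheses defining $\tilde S(f)$ exclude alternatives (ii), (iii), (iv) of Theorem~\ref{dys2}, so that the surviving alternative (i) --- Hadamard differentiability at $x$ --- must hold.

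First I would convert the cone difference-quotient bounds into one-sided directional derivative bounds along the cone axes. Put $y=x+tz$ with $t\to0+$ and $z\to v_1$; then $y-x=tz$, hence $(y-x)/\|y-x\|=z/\|z\|\to v_1$, so $y$ eventually lies in $C(x,v_1,\delta_1)$ while $\|z\|\to1$. From the identity
$$\frac{f(x+tz)-f(x)}{t}=\|z\|\cdot\frac{f(y)-f(x)}{\|y-x\|}$$
together with the finiteness of $\limsup_{y\to x,\,y\in C(x,v_1,\delta_1)}(f(y)-f(x))\|y-x\|^{-1}$ I would conclude $f_H^{+}(x,v_1)<\infty$; the symmetric computation with $v_2$, $\delta_2$ gives $f_H^{-}(x,v_2)>-\infty$. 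Only an upper (resp.\ lower) bound is needed, so the familiar caveat that $\limsup$ does not commute with products is harmless --- one just uses that $\|z\|$ stays in $[\tfrac12,2]$ near $v_1$.

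Then I would simply match these two bounds against the four alternatives. Both (ii) and (iii) demand $f_H^{+}(x,v)=\infty$ for every $v\in X$, in particular for $v=v_1$, contradicting $f_H^{+}(x,v_1)<\infty$; both (ii) and (iv) demand $f_H^{-}(x,v)=-\infty$ for every $v$, in particular for $v=v_2$, contradicting $f_H^{-}(x,v_2)>-\infty$. Hence none of (ii)--(iv) can hold at $x$, forcing (i). This yields Hadamard differentiability on all of $\tilde S(f)\setminus A$, and since $A\in\tilde{\C}$ the choice $N:=A$ has the required property. I expect no genuine obstacle: the only delicate point is the elementary passage between the cone formulation of the hypotheses and the $f_H^{\pm}$ derivatives, together with the bookkeeping of which cone condition eliminates which alternative. (Corollary~\ref{incn} is proved in exactly the same way from Theorem~\ref{dys1}, replacing ``Hadamard'' by ``Fr\'echet'' and $\tilde{\C}$-null by $\Gamma$-null.)
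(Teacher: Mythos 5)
Your argument is correct and is exactly the intended one: the paper gives no explicit proof, stating only that the corollary follows immediately from Theorem~\ref{dys2}, and your derivation (take $N:=A$, convert the cone bounds into $f_H^{+}(x,v_1)<\infty$ and $f_H^{-}(x,v_2)>-\infty$, and observe that these kill alternatives (ii)--(iv)) is precisely that immediate deduction. The one point you flagged --- handling the factor $\|z\|$ in the passage from cone difference quotients to Hadamard directional derivatives --- is dealt with adequately by your boundedness remark, so there is no gap.
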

\begin{remark}\label{inco}
For $X=\R^2$, Corollary \ref{incn} was first proved by S. Saks in  \cite[Th. 7, p. 238]{Sa1}.
The ``basic version of the Denjoy-Young-Saks theorem'' of \cite{Pr} easily implies a weaker version  (in which $v_2 = -v_1$) of Corollary \ref{inse}.   
\end{remark} 
\begin{remark}\label{nejs}
The following easy examples show that Theorem \ref{dys2} gives the strongest  relation among Hadamard directional derivatives
	$f^+_H (x,v)$, $f^-_H (x,v)$ ($v \in X$) which holds a.e. for an arbitrary function on $X$.
	 Here ``a.e.'' can be taken in the sense of $\ct$-null sets, Aronszajn null sets, Haar null sets
	 or $\Gamma$-null sets. 
		The above statement has an obvious precise meaning, but we omit its formal (rather lengthy)
	 formulation.
	\smallskip
	
	Let $X$ be a separable Banach space, $L \in X^*$ and $C$, $D$ two disjoint countable sets dense
	 in $X$. 
	\begin{enumerate}
	\item
	If $f:= L$, then  $f'_H(x)=L$ for each $x \in X$.
	\item
	If $f:= \chi_C - \chi_D$, then  $f^+_H(x,v) = \infty$ and $f^-_H(x,v) = -\infty$
		 for each $x \in X \setminus (C \cup D)$ and $v \in X$ (and the set $X \setminus (C \cup D)$ is not null
	 in any sense mentioned above).
	\item
	If $f:= L + \chi_C$, then $f_H^{+}(x,v)= \infty$  and $L(v) = f_H^{-}(x,v)$ for each  $x \in X \setminus C$  and $v \in X$.
\item 
If $f:= L - \chi_C$, then $f_H^{-}(x,v)= -\infty$  and $L(v) = f_H^{+}(x,v)$ for each  $x \in X \setminus C$  and $v \in X$.
	\end{enumerate}

\end{remark}

{\bf Acknowledgments} \  The research  was partially supported by the grant GA\v CR P201/12/0436.


\begin{thebibliography}{WWW}

\bibitem{ACL}
 D. Aussel, J.-N. Corvellec, M. Lassonde:  Mean value property and subdifferential criteria for lower semicontinuous functions, Trans. Amer. Math. Soc. 347 (1995)  4147--4161.

\bibitem{BL}
Y.~Benyamini, J.~Lindenstrauss: Geometric Nonlinear Functional
Analysis, Vol.~1, Colloquium Publications 48,
American Mathematical Society, Providence, 2000.

\bibitem{Be}
A. S. Besicovitch: On differentiation of functions of two variables, Math. Z. 41
 (1936) 402--404.

\bibitem{CC} 
Y. Chabrillac, J.-P. Crouzeix:
Continuity and differentiability properties of monotone real functions of several
 variables, Mathematical Programming Study 30 (1987) 1--16.

\bibitem{Du}
J. Duda: 
On G\^ ateaux differentiability of pointwise Lipschitz mappings,
Canad. Math. Bull. 51 (2008)  205--216.

\bibitem{Ha}                   
U. S. Haslam-Jones:            
 Derivate planes and tangent planes of a measurable function,
          Q. J. Math. 3 (1932) 120-132.
                              

\bibitem{Io}
A. D. Ioffe: Typical convexity (concavity) of Dini-Hadamard upper (lower) directional derivatives of functions on separable Banach spaces, J. Convex Anal.  17 (2010)  1019--1032.
 


\bibitem{LP} J. Lindenstrauss,   D. Preiss:
  On Fr\' echet differentiability of Lipschitz maps
 between Banach spaces, Ann. of Math. 157 (2003) 257--288.
 
 
 
 
 \bibitem{LPT} J. Lindenstrauss,   D. Preiss, J. Ti\v ser: 
 Fr\' echet differentiability of Lipschitz maps and porous sets in Banach spaces, Princeton University Press, Princeton
  2012.
  
  
 

\bibitem{Ma}  
 J.  Mal\' y: 
 A simple proof of the Stepanov theorem on differentiability almost everywhere,
Expo. Math. 17 (1999)  59--61. 
 
 
 \bibitem{MS}
J. L. Massera, J. J. Sch\"affer: Linear differential equations and functional analysis, I,
Ann. of Math. 67 (1958)  517--573.

\bibitem{Pe} 
J.-P. Penot:  Calculus without derivatives, Graduate Texts in Mathematics 266, Springer, New York, 2013.


\bibitem{Pr}
D. Preiss: G\^ ateaux differentiability of cone-monotone functions, preprint, May 2013. 

\bibitem{PZ}
D. Preiss, L. Zaj\' {i}\v cek: Directional derivatives of Lipschitz functions, Israel J. Math. 
125 (2001) 1--27.


\bibitem{Sa1}
S. Saks:       
 Théorie de l'intégrale,  Warsaw   1933.





\bibitem{Sa2}
S. Saks:            
  Theory of the integral,
 New York 1937.
      

 \bibitem{Sha}
A. Shapiro: On concepts of directional differentiability,  J. Optim. Theory Appl. 66 (1990)
  477--487. 


\bibitem{Wa1}                   
A.J. Ward:            
On the differential structure of real functions, Proc. Lond. Math. Soc.
             39  (1935) 339--362.
                              
															

\bibitem{ZaAAA}
L. Zaj\'\i\v cek:  On $\sigma$-porous sets in abstract spaces, Abstr. Appl. Anal. 2005 (2005) 509--534.
                              
\bibitem{ZaMB}
 L. Zaj\' {i}\v cek: On sets of non-differentiability of Lipschitz and convex functions, Math. Bohem. 132 (2007)  75--85.



\bibitem{ZaGH}
 L. Zaj\' {i}\v cek: Hadamard differentiability via G\^ ateaux differentiability, to appear
in Proc. Amer. Math. Soc.
\textit{}
\bibitem{Zadgh}
 L. Zaj\' {i}\v cek: G\^ ateaux and Hadamard differentiability via directional differentiability, to appear in J. Convex Anal.
\bibitem{ZaCM}
 L. Zaj\' {i}\v cek: Remarks on  Fr\' echet differentiability of  pointwise Lipschitz, cone-monotone  and quasiconvex functions, submitted.
\end{thebibliography}
 \end{document}